\newcommand{\idiot}[1]{\vspace{5 mm}\par \noindent
\marginpar{\textsc{Note}}
\framebox{\begin{minipage}[c]{.99 \textwidth}
#1 \end{minipage}}\vspace{5 mm}\par}
\newcommand{\todone}[1]{\vspace{5 mm}\par \noindent
\marginpar{\textsc{DONE!}}
\framebox{\begin{minipage}[c]{.99 \textwidth}
\tt #1 \end{minipage}}\vspace{5 mm}\par}
\renewcommand{\todone}[1]{}
\renewcommand{\idiot}[1]{}
\newdimen\squaresize \squaresize=12pt
\newdimen\thickness \thickness=0.4pt
\def\square#1{\hbox{\vrule width \thickness
   \vbox to \squaresize{\hrule height \thickness\vss
      \hbox to \squaresize{\hss#1\hss}
    \vss\hrule height\thickness}
\unskip\vrule width \thickness}
\kern-\thickness}
\def\vsquare#1{\vbox{\square{$#1$}}\kern-\thickness}
\def\blk{\omit\hskip\squaresize}
\def\young#1{
\vbox{\smallskip\offinterlineskip
\halign{&\vsquare{##}\cr #1}}}
\def\thisbox#1{\kern-.09ex\fbox{#1}}
\def\downbox#1{\lower1.200em\hbox{#1}}
\newtheorem{thm}{Theorem}[section]
\newtheorem*{thm*}{Theorem}
\newtheorem{prop}[thm]{Proposition}
\newtheorem*{prop*}{Proposition}
\newtheorem{lemma}[thm]{Lemma}
\newtheorem*{lemma*}{Lemma}
\newtheorem{cor}[thm]{Corollary}
\newtheorem*{cor*}{Corollary}
\theoremstyle{definition}
\newtheorem{defn}[thm]{Definition}
\newtheorem*{defn*}{Definition}
\newtheorem{eg}[thm]{Example}
\newtheorem*{eg*}{Example}
\newtheorem{remark}[thm]{Remark}
\newcommand{\K}{\{0,1,\dots,k\}}
\newcommand{\tr}{z}
\def\uu{{\bf u}}
\newcommand{\Z}{\mathbb{Z}}
\newcommand{\R}{\mathbb{R}}
\newcommand{\C}{\mathbb{C}}
\newcommand{\Waf}{W_{\textrm{af}}}
\newcommand{\Wfin}{W_{\textrm{fin}}}
\newcommand{\Iaf}{I_{\textrm{af}}}
\newcommand{\Aaf}{A_{\textrm{af}}}
\newcommand{\Qaf}{Q_{\textrm{af}}}
\newcommand{\Paf}{P_{\textrm{af}}}
\newcommand{\Ifin}{I_{\textrm{fin}}}
\newcommand{\Afin}{A_{\textrm{fin}}}
\newcommand{\Qfin}{Q_{\textrm{fin}}}
\newcommand{\Pfin}{P_{\textrm{fin}}}
\newcommand{\Wext}{W_{\textrm{ext}}}
\newcommand{\hstar}{\mathfrak{h}^*}
\newcommand{\h}{\mathfrak{h}}
\newcommand{\haf}{\mathfrak{h}_{\textrm{af}}}
\newcommand{\hstaraf}{\mathfrak{h}_{\textrm{af}}^*}
\newcommand{\hstarfin}{\mathfrak{h}_{\textrm{fin}}^*}
\newcommand{\hfin}{\mathfrak{h}_{\textrm{fin}}}
\newcommand{\Hom}{\textrm{Hom}}
\newcommand{\realrootsaf}{\Phi_{\textrm{re}}}
\newcommand{\realrootsfin}{\Phi_{\textrm{fin}}}
\newcommand{\level}{\textrm{lev}}
\newcommand{\Gr}{\textrm{Gr}_G}
\begin{document}
 
\title{Expansions of $k$-Schur functions in the affine nilCoxeter algebra}

\author[1,2,4]{Chris Berg}
\author[1,4] {Nantel Bergeron}
\author[3] {Steven Pon}
\author[1,4] {Mike Zabrocki}
\affil[1]{Fields Institute\\ Toronto, ON, Canada}
\affil[2]{Universit\'e du Qu\'ebec \`a Montr\'eal, Montr\'eal, QC, Canada}
\affil[3]{University of Connecticut, Storrs, CT, USA}
\affil[4]{York University\\ Toronto, ON, Canada}

\maketitle

\abstract{We  give  a  type  free  formula  for  the  expansion  of  $k$-Schur  functions  indexed  by  fundamental  coweights  within  the  affine  nilCoxeter  algebra.  Explicit  combinatorics  are  developed  in  affine  type $C$.}

\section{Introduction}
 
In \cite{BBTZ}, Berg, Bergeron, Thomas and Zabrocki gave several formulas for the expansion of certain $k$-Schur functions (indexed by fundamental weights) inside the affine nilCoxeter algebra of type A. In particular, they gave an explicit combinatorial description for the reduced words which appear in the expansion.

These coefficients have been studied extensively; they are the coefficients which appear in the product of two $k$-Schur functions. These functions have been identified with representing the homology of the affine Grassmannian in type $A$. 

 They verified their formula by identifying terms in the expansion of a $k$-Schur function with pseudo-translations (elements of the nilCoxeter algebra which act by translating alcoves in prescribed directions). This generalized Proposition 4.5 of Lam \cite{Lam2}, where he gave formulas for $k$-Schur functions indexed by root translations.

Since then, Lam and Shimozono \cite{LS} have discovered a type free analogue of this fact for $k$-Schur functions indexed by coweights. The main goal of this paper is to combine the new result of Lam and Shimozono with the techniques of \cite{BBTZ} to give descriptions of the corresponding reduced words appearing in the decomposition of these $k$-Schur functions, with an emphasis on combinatorics.

Section \ref{type free} develops a type free formula for $k$-Schur functions indexed by special Grassmannian permutations, Section \ref{type c} focuses on the specific combinatorics of affine type $C$, and Section \ref{remaining types} discusses a few examples of the combinatorics in affine types $B$ and $D$.

\subsection{Acknowledgments}

This work is supported in part by CRC and NSERC.
It is the results of a working session at the Algebraic
Combinatorics Seminar at the Fields Institute with the active
participation of C.~Benedetti, Z.~Chen, H.~Heglin, and D.~Mazur. 

This research was facilitated by computer exploration using the open-source
mathematical software \texttt{Sage}~\cite{sage} and its algebraic
combinatorics features developed by the \texttt{Sage-Combinat}
community~\cite{sage-combinat}.

The authors would like to thank Brant Jones for coming to Montr\'eal to explain the combinatorics related to the affine Grassmannian elements in classical types.
 
 The authors would like to thank the reviewer for helpful comments.

\subsection{A brief introduction to root systems}

\subsubsection{Root systems}
 
Let $(I,A)$ be a Cartan datum, i.e., a finite index set $I$ and a generalized Cartan matrix $A= (a_{ij} \mid i,j \in I)$ such that $a_{ii} = 2$ for all $i \in I$, $a_{ij} \in \mathbb{Z}_{\leq 0}$ if $i \neq j$, and $a_{ij} = 0$ if and only if $a_{ji} = 0$.  If the corank of $A$ is 1, then $A$ is of \emph{affine type}; in this case, we write $(\Iaf, \Aaf)$, and let $\Iaf = \{0,1,\ldots, n\}$.  From a Cartan datum of affine type we may recover a corresponding Cartan datum $(\Ifin, \Afin)$ of \emph{finite type} by considering $\Ifin = \Iaf \setminus \{ 0 \}$.  In general, we denote affine root system data with an ``af'' subscript, and finite root system data with a ``fin'' subscript.  Root system data of arbitrary type has no subscript.
 
 Also associated to a Cartan datum we have a root datum, which consists of a free $\Z$-module $\h$, its dual lattice $\hstar = \Hom(\h, \mathbb{Z})$, a pairing $\langle \cdot , \cdot \rangle: \h \times \hstar \to \Z$ given by $\langle \mu,\lambda \rangle = \lambda(\mu)$, and sets of linearly independent elements $\{ \alpha_i \mid i \in I\} \subset \hstar$ and $\{\alpha_i^\vee \mid i \in I\} \subset \h$ such that $\langle \alpha_i^\vee, \alpha_j \rangle = a_{ij}$.  The $\alpha_i$ are known as \emph{simple roots}, and the $\alpha_i^\vee$ are \emph{simple coroots}.  The spaces $\h_\R = \h \otimes \R$ and $\hstar_\R = \hstar \otimes \R$ are the coroot and root spaces, respectively.

 \subsubsection{The affine Weyl group}
 
Associated to a Cartan datum we have the \emph{Weyl group} $W$, with generators $s_i$ for $i \in I$, and relations $s_i^2 = 1$ and  \[ \underbrace{s_is_js_is_j \cdots}_{m(i,j)} = \underbrace{s_j s_i s_j s_i \cdots }_{m(i,j)},\] where $m(i,j) = 2,3,4,6$ or $\infty$ as $a_{ij}a_{ji} = 0,1,2,3$ or $\geq 4$, respectively.  An element of the Weyl group may be expressed as a word in the generators $s_i$; given the relations above, an element of the Weyl group may have multiple \emph{reduced words}, words of minimal length that express that element.  The length of any  reduced word of $w$ is the \emph{length} of $w$, denoted $\ell(w)$.  The \emph{Bruhat order} on Weyl group elements is a partial order where $v < w$ if there is a reduced word for $v$ that is a subword of a reduced word for $w$.  If $v < w$ and $\ell(v) = \ell(w) - 1$, we write $v \lessdot w$.
 
 If $j$ is in $\Iaf$, we denote by $W_j$ the subgroup of $W$ generated by the elements $s_i$ with $i \neq j$.  We denote by $W^j$ a set of minimal length representatives of the cosets $W/W_j$.  The elements of $W^0$ will be referred to as \emph{Grassmannian} elements.
 
\todone{{\bf NB:} It would be better to say more what $\h$ is. I mean as it is stated here, it is too vague. What is the rank of $\h$? Or better just say that $\h^*=\Z d\oplus \bigoplus \alpha^\vee_i$ (this also bring the other point that Steve was mentioning, I also think the root are in $\h$ and coroot in $\h^*$, this need to be cheched). Anyway, we can at least define more precisely right now what is $\h^*$ or $\h$ leaving details for latter\\
{\bf SP:} The idea was to define root spaces in general (rather than for just affine type), in which case this is the only way I can think of to do it (without getting into actual Lie algebras, etc.).  All that's really needed is some vector space and its dual, with these special vectors inside each that have the right relations.  That's why there's confusion between $\h$ and $\hstar$, since some authors like to make coroots in the ``original'' vector space and some like to make them lie in the dual.  It's just a matter of notation, and we just have to be consistent.  I tried to make it everywhere that coroots were in $\h$ and roots in $\hstar$, but I don't know if I caught every instance.  This convention agrees with Kac, and I think with Carter as well (but I don't have Carter with me right now).\\
I guess an alternative way to introduce things would be to just say: Let $A$ be a generalized Cartan matrix of affine type, let $\mathfrak{g}$ be its corresponding Lie algebra, and let $\h$ be the Cartan subalgebra of $\mathfrak{g}$...but I feel less comfortable doing that, since there's a lot hidden in that statement.
}

\todone{{\bf CB:} Is it true that $\langle \alpha^\vee_i, \alpha_j \rangle = a_{ij}$. I thought that there was like a $a_i^{-1}$ in this formula, or am i thinking of $(\alpha_i, \alpha_j)$\\
{\bf SP:} Right.  $(\alpha_i \mid \alpha_j) = a_i^\vee a_i^{-1} a_{ij}$.}

\todone{ I need help here.  I am having a hard time seeing how $\hfin$ is defined.
It just seems to say $\h$ is a free $\Z$-module.
I see that $\haf$ is defined a little later on.  I find it really awkward that
it doesn't seem to have a clear definition beyond it is a `free ${\mathbb Z}$-module'
until we get to equation \eqref{hstarafdef} and \eqref{hafdef} 
and then I can't seem to find $\hfin$
at all.

{\bf SP:} What I was trying to do was to write about root systems in general, and then include notes now and then that restrict to the affine or finite case.  So $\hstar$ is a general root system, but $\hstaraf$ is a root system of affine type, etc.  So this initial section is just to present a combinatorial definition of root systems, to avoid having to get into the Lie algebra side of things.  It's kind of the approach taken by Macdonald or Carter.

So, up until just past equation (4), it's all speaking of arbitrary type root systems.  The next paragraph is on affine type, then a few paragraphs on arbitrary type (where I had accidentally put some af subscripts, that are now removed).  Then I get specific about what $\h$, $\hstar$ look like in affine type, now calling them $\haf$ and $\hstaraf$, and so that's when it makes sense to talk about $\hfin$ for the first time. 

It may not be the best organization -- another way would be to speak entirely about root systems in general, and then at the end put everything about affine root systems.  Or, ignore general root systems completely and speak only of finite and affine type.  In general, if we need to cut, I think this section could be cut down quite a bit by skipping details and just telling results (for example, we don't really need to give the formula for translation actions in general, we could just give formulas for the level 1 and level 0 actions...that way, we don't even have to define the isomorphism between root and coroot spaces, we could just define the actions with a couple formulas).

{\bf CB}: Steve, you know better than any of us what to cut here. Can you cut some of the stuff. I agree, stick to level 0 and level 1. Can we just define translation modulo $c$? We don't actually use the $c$ anyways... I don't see why we need $d$, $\delta$, $c$ or the non degenerate symmetric bilinear form $(,)$. 

Also, if it is ok with everyone I think I am going to go through the whole paper and put a $\diamond$ wherever I see a level 1 action and a $\star$ wherever i see a level 0 action.

{\bf MZ}: Can we move equation (5) and (6) a little higher and say $\hfin$ and $\hstarfin$ are
$\bigoplus_{i \in \Ifin} \Z \alpha_i/\Lambda_i$?

{\bf SP}: I made this change, and I'm in the process of cutting down on extraneous material here.
}

\subsubsection{Weyl group actions}

Given a simple root $\alpha_i$, there is an action $\star$ of $W$ on $\h_\R$ or $\hstar_\R$,
defined by the action of the generators of $W$ as
\begin{align}
s_i \star \lambda &= \lambda - \langle \alpha_i^\vee, \lambda \rangle \alpha_i \quad \textrm{for } i \in I, \lambda \in \hstar_\R\\
s_i \star \mu &= \mu - \langle  \mu, \alpha_i \rangle \alpha_i^\vee \quad \textrm{for } i \in I, \mu \in \h_\R.\label{eq:staraction}
\end{align}
This action by $W$ satisfies $\langle w \star \mu,  w \star \lambda \rangle = \langle \mu,\lambda \rangle$.
 
The set of \emph{real roots} is $\realrootsaf = W \star \{\alpha_i \mid i \in I\}$.  Given a real root $\alpha = w \star \alpha_i$, we have an  \emph{associated coroot} $\alpha^\vee = w \star \alpha_i^\vee$ and an  \emph{associated reflection} $s_\alpha = w s_i w^{-1}$ (these are well-defined, and independent of choice of $w$ and $i$).  

The action by $W$ preserves the \emph{root lattice} $Q = \bigoplus_{i \in I} \Z \alpha_i$ and \emph{coroot lattice} $Q^\vee = \bigoplus_{i \in I} \Z \alpha_i^\vee$.  The \emph{fundamental weights} are 
$\{\Lambda_i \in \hstar_\R \mid \Lambda_i(\alpha_j^\vee) = \delta_{ij} \,  \hbox{ for } i,j \in I\}$, 
and the \emph{fundamental coweights} are $\{\Lambda_i^\vee \in \h_\R \mid \alpha_i(\Lambda_j^\vee) = \delta_{ij}\,   \hbox{ for } i,j \in I \}$.
  These generate the \emph{weight lattice} $P = \bigoplus_{i \in I} \Z \Lambda_i$ and \emph{coweight lattice} $P^\vee = \bigoplus_{i \in I} \Z \Lambda_i^\vee$.  

\todone{{\bf SP}: Are $\hstar$ and $\h$ all mixed up?  I think of it as roots $\alpha$ to be in $\hstar$ and coroots $\alpha^\vee$ to be in $\h$... I need to double check this.

{\bf CB:} This is also how I think of it. weights and roots are dual vectors, like $v \in V_\lambda$ weight space means that $h v = \lambda(h) v$ for all $h \in \h$, so $\lambda \in \hstar$. Right?

 On the paragraph above, a pairing on $\hstar$ should be defined on $(\alpha_i, \alpha_j)$, not $(\alpha_i^\vee, \alpha_j^\vee)$ because $\alpha_i \in \hstar$ and $\alpha_i^\vee \in \h$, right?
 
 {\bf SP:} Right.  It should have been defined on $\haf$.  Fixed.}

We let $\hfin$ denote the linear span of $\{ \alpha_i^\vee  \mid i \neq 0 \}$ and $\hstarfin$ denote the span of $\{ \alpha_i \mid i \neq 0\}$.
Then there is another action $\diamond$ of $W$ on $\hfin \otimes \mathbb{R}$, called the level one action in \cite{Shim}, which is defined by:

\[ s_i \diamond \mu = \left\{
	\begin{array}{ll}
		s_i \star \mu  & \mbox{if } i \neq 0 \\
		s_0 \star \mu - \alpha_0^\vee & \mbox{if } i = 0
	\end{array}
\right. \]
where $\alpha_0^\vee$ is interpreted as $\alpha_0^\vee = - \sum_{i \in \Ifin} \alpha_i^\vee$.

In addition to reflections $s_\alpha$, we have the translation endomorphisms of $\hfin \otimes \R$ given by
\begin{equation}
\label{eq: translations definition}
t_\gamma \diamond \mu  = \mu +  \gamma 
\end{equation}
for $\gamma \in \hfin \otimes \R$.  One can show that $t_\mu t_\gamma = t_{\mu + \gamma}$ and that $t_{w(\mu)} = wt_\mu w^{-1}$ for $w \in \Wfin$, $\gamma, \mu \in \hfin \otimes \R$.

If by abuse of notation we let $\Qfin^\vee = \{t_{\alpha^\vee} \mid \alpha^\vee \in \Qfin^\vee\}$, then the affine Weyl group has an alternate presentation as
$$
\Waf = \Wfin \ltimes \Qfin^\vee.
$$

\begin{remark}
Elements of $\Waf$ corresponding to translations act trivially via the $\star$ action, i.e. $t_\gamma \star \mu = \mu$. 
 \end{remark}

\subsubsection{The extended affine Weyl group} 
 
We can define the \emph{extended affine Weyl group} $\Wext$ by
$$
\Wext = \Wfin \ltimes \Pfin^\vee.
$$
$\Wext$ also has an action on $\hfin \otimes \R$ and $\hstarfin \otimes \R$ via the translation formula \eqref{eq: translations definition}.  Translations in the extended affine Weyl group also act trivially under the $\star$ action on $\hfin \otimes \R$.

\todone{ {\bf CB: }What is $\tau(i)$? We've yet to define an action of $W_{ext}$, as far as I can see. You say that elements of $\Omega$ permute the coroots, so I'm guessing that $\tau(i) = j$ iff $\tau(\alpha_i^\vee) =  \alpha_j^\vee$? Can you be more explicit about the action of $W_{ext}$ and why its restriction to $\Omega$ permutes coroots?

{\bf SP:} The translations by $\alpha$ in \eqref{eq: translations definition} are defined for any $\alpha \in \hstar$, not just roots.  So that formula, together with the usual action of the finite Weyl group, gives an action of the extended affine Weyl group on $\h$ or $\hstar$.

{\bf CB:} I thought you might say that. I think we should change the $\alpha, \beta$ in equation \eqref{eq: translations definition} to $\mu, \nu$ ($\alpha$ usually is a root, or at least an element of the root lattice, so this threw me!)
}

\subsubsection{Affine hyperplanes and alcoves}
  
In $\hfin \otimes \R$, let $H_{\alpha,k} = \{ \mu \mid \langle \mu, \alpha \rangle = k\}$, where $\alpha$ is a finite root and $k \in \Z$.  Reflection over the hyperplane $H_{\alpha,k}$ is equivalent to $t_{k\alpha^\vee}s_{\alpha}$ acting by the $\diamond$ action.  Each hyperplane $H_{\alpha,k}$ is stabilized by the action of $\Waf$ and the set of hyperplanes $\mathcal{H} = \cup_{\alpha,k} H_{\alpha,k}$ is stabilized by the action of $\Wext$.
 
The \emph{fundamental alcove} is the polytope bounded by $H_{\alpha_i,0}$ for $i \in \Ifin$ and $H_{\theta,1}$, where $\theta$ is the \emph{highest root}.  
It is a fundamental domain for the $\diamond$ action of $W$ on $\hfin \otimes \R$.  Therefore, we may identify alcoves with affine Weyl group elements; we define $\mathcal{A}_w$ to be the alcove  $w^{-1} \diamond \mathcal{A}_\emptyset$, where $\mathcal{A}_\emptyset$ is the fundamental alcove.  Additionally, we may identify alcoves with their \emph{centroids}, i.e., the average of the vertices of the alcove.  

\subsubsection{Dynkin diagram automorphisms}
 The length of an element $w \in W$, defined earlier in terms of reduced words, may equivalently be defined to be the number of hyperplanes $H_{\alpha,k}$ that lie between the alcoves $\mathcal{A}_w$ and $\mathcal{A}_\emptyset$.  We can similarly define the length of an element $w \in \Wext$ to be the number of hyperplanes that lie between $\mathcal{A}_w$ and $\mathcal{A}_\emptyset$.  This definition of length implies that there are non-trivial elements of $\Wext$ of length 0.  In fact, it is known \cite{mac03} that
 $$
 \Omega := \{ u \in \Wext \mid \ell(u) = 0 \} \cong \textrm{Aut}(D)  \cong \Pfin^\vee / \Qfin^\vee,
 $$
where $\textrm{Aut}(D)$ is the set of Dynkin diagram automorphisms.

The first of the above isomorphisms can be viewed concretely as follows.  We let $\Omega = \{ u \in \Wext \mid \ell(u) = 0\}$. Let $J = \{ i \in \Ifin \mid \tau(0) = i, \tau \in \textrm{Aut}(D)\}$ be the set of \emph{cominiscule coweights}.  Define $x_\lambda$ to be a minimal length representative of the coset $t_\lambda \Wfin$ for $\lambda \in \Pfin^\vee$.  It can be shown that $ x_\lambda = t_\lambda v_\lambda^{-1}$, where $v_\lambda \in \Wfin$ is shortest element of $\Wfin$ such that $v_\lambda(\lambda) = \lambda_{-}$, and $\lambda_{-}$ is the unique antidominant element of the $\Wfin$-orbit of $\lambda$.  Then $\Omega = \{ x_{\Lambda_i} \mid i \in J\}$, and the element $x_{\Lambda_i}$ corresponds to the Dynkin diagram automorphism sending the node $0$ to the node $i$.  Under this map and the action of $\Wext$ on $\Paf^\vee$ given above, an element $\tau \in \textrm{Aut}(D)$ acts on the coweight lattice $\Paf^\vee$ via $\tau \star \alpha_i^\vee = \alpha_{\tau(i)}^\vee$.  Furthermore,

\todone{{\bf CB:} I highlighted this identity since it comes up later on. Where does it come from? I would like to see a reference } 

 \begin{equation}\label{eqn:Wext} \tau s_i = s_{\tau(i)} \tau \end{equation} for $i \in \Iaf, \tau \in \Omega$.  
  Finally, for $\tau \in \Omega$, $u = s_{i_1} s_{i_2} \cdots s_{i_k} \in W$, we define $\tau(u) = s_{\tau(i_1)}s_{\tau(i_2)} \cdots s_{\tau(i_k)}$.
 
  The extended affine Weyl group can be realized as a semi-direct product of the affine Weyl group and $\Omega$: \[ W_{ext} = W_{af} \ltimes \Omega.\]
 The relation (\ref{eqn:Wext}) describes how elements commute in this realization of $W_{ext}$. 
 
\subsection{k-Schur functions for general type}
  
Let $\mathbb{F} = \mathbb{C}((t))$ and $\mathbb{O} = \mathbb{C}[[t]]$.  The \emph{affine Grassmannian} is defined as $\Gr := G(\mathbb{F})/G(\mathbb{O})$.  $\Gr$ can be decomposed into \emph{Schubert cells} $\Omega_w = \mathcal{B}wG(\mathbb{O}) \subset G(\mathbb{F})/G(\mathbb{O})$, where $\mathcal{B}$ denotes the Iwahori subgroup and $w \in W^0$, the set of Grassmannian elements in the associated affine Weyl group.  The Schubert varieties, denoted $X_w$, are the closures of $\Omega_w$, and we have $\Gr = \sqcup \Omega_w = \cup X_w$, for $w \in W^0$.  The homology $H_*(\Gr)$ and cohomology $H^*(\Gr)$ of the affine Grassmannian have corresponding Schubert bases, $\{ \xi_w\}$ and $\{\xi^w\}$, respectively, also indexed by Grassmannian elements.  It is well-known that $\Gr$ is homotopy-equivalent to the space $\Omega K$ of based loops in $K$ (due to Quillen, see \cite[\textsection 8]{ps86} or \cite{mit88}).  The group structure of $\Omega K$ gives $H_*(\Gr)$ and $H^*(\Gr)$ the structure of dual Hopf algebras over $\mathbb{Z}$.
 
The \emph{nilCoxeter algebra} $\mathbb{A}_0$ may be defined via generators and relations from any Cartan datum, with generators $\uu_i$ for $i \in I$, and relations $\uu_i^2 = 0$ and 
\[ \underbrace{\uu_i\uu_j\uu_i\uu_j \cdots}_{m(i,j)} = \underbrace{\uu_j \uu_i \uu_j \uu_i \cdots }_{m(i,j)},\] where $m(i,j) = 2,3,4,6$ or $\infty$ as $a_{ij}a_{ji} = 0,1,2,3$ or $\geq 4$, respectively.
  
  Since the braid relations are exactly those of the corresponding Weyl group, we may index nilCoxeter elements by elements of the Weyl group, e.g., $\uu(w) = \uu_{i_1} \uu_{i_2}\cdots \uu_{i_k}$, whenever $s_{i_1}s_{i_2}\cdots s_{i_k}$ is a reduced word for $w$.
 
\todone{{\bf CB:}In the last sentence of the paragraph above, it should read ``where $s_{i_1}s_{i_2}\cdots s_{i_k}$ is a reduced word for $w$''. I would have changed it, but I didn't see $s_i$ in this section? Can we just assume the reader understands?
{\bf SP:} Reduced words, etc., are introduced in the Weyl groups and root systems background section.  I fixed the error.}
 
By work of Peterson \cite{Pet}, there is an injective ring homomorphism $j_0: H_*(\Gr) \hookrightarrow \mathbb{A}_0$.  This map is an isomorphism on its image (actually a Hopf algebra isomorphism) $j_0: H_*(\Gr) \to \mathbb{B}$, where $\mathbb{B}$ is known as the \emph{affine Fomin-Stanley subalgebra}.  
 
\todone{{\bf CB:} In the lemma above, shouldn't it end `` for all $\mathbf{w} \in W$''? That makes more sense to me, but i didn't  want to change it without checking with someone.
{\bf SP:} It should be for all $v$.  For a fixed $v$, $w$ ranges over all covers of $v$ in the support of $a$.  But the statement needs to be true for all possible $v$.}
 
\todone{{\bf CB:} Important piece missing here! Before the following definition, we need to mention that in type A, $\mathbb{B}$ is isomorphic to the symmetric function ring with bounded part $k$ and the schubert classes under this isomorphism correspond to $k$-Schur functions.

{\bf SP:} Should this go in the introductory section?  E.g., $k$-Schurs came about in theory of Macdonald functions, then Lam found that they model homology, etc.  Or would it need to be too detailed to go there?

{\bf CB:} I think it would go best here, after everything is defined, although a brief mention of it in the intro is also a good idea.
}
 
\begin{defn}
For $W$ of affine type $X$ and $w\in W^0$ we define the non-commutative $k$-Schur function  $\mathfrak{s}^X_w$  of affine type $X$ to be the image of the Schubert class $\xi_w$ under the isomorphism $j_0$, so $\mathfrak{s}^X_w = j_0 (\xi_w)$. When obvious from context, we will simply write $\mathfrak{s}_w$, omitting the type. This definition comes from the realization of $k$-Schur functions identified with the homology of the affine Grassmannian in \cite{Lam2}. In type $C$ this was first properly developed in \cite{LSS}, and in types $B$ and $D$ this was first developed in \cite{Pon}.
\end{defn}

 \begin{eg}
 In type $A_n^{(1)}$, the elements $\mathfrak{s}^A_w$ are the non-commutative $k$-Schur functions defined in \cite{Lam2}.  One can define a further isomorphism between the affine Fomin-Stanley subalgebra and the ring of symmetric functions generated by the homogeneous symmetric functions $h_\lambda$ with $\lambda_1 \leq n-1$.  Under this isomorphism, the non-commutative $k$-Schur functions are conjectured to
correspond to the $t=1$ specializations of the $k$-Schur functions of 
Lapointe, Lascoux and Morse \cite{LLM} indexed by a $k$-bounded partition corresponding 
to the element $w$ and are isomorphic to the $k$-Schur functions of Lapointe and Morse \cite{LM}.
\end{eg}

\section{A type-free formula}\label{type free}
 
\todone{{\bf NB}: To really have a chance to really check the proof below, I would need
 
1- The definition of $X_\gamma $ in all type
 
2- The lemma 4.7 of \cite{LSS} (I know it is hidden in the notes, can someone put it in Section 2.2 clearly stated)
}
 
 \todone{{\bf SP:} I changed a few $z$'s to $t$'s, so that $t$'s are always true translations, and $z$'s are always pseudo-translations.}
 
Given an element $t = w \tau \in \Wext$ with $w\in \Waf$ and $\tau \in \Omega$, we denote by $\bar{t} = w$ the image of $t$ modulo $\Omega$.  For $\lambda \in \Pfin^\vee$ recall that $t_\lambda \in \Wext$ is the translation which acts on $\haf$ according to \eqref{eq: translations definition}. We let $z_\lambda = \overline{t_\lambda}^{-1}$. In \cite{BBTZ}, the $z_\lambda$ were called \textit{pseudo-translations}.
For a coweight $\gamma$, we let $\Gamma_{\gamma} = \Wfin \gamma$.
\todone{what does the overline represent?}

Independently from Lam and Shimozono \cite{LS}, we have simultaneously discovered a generalization of \cite{BBTZ} and \cite{Lam2} which gives a formula for the $k$-Schur functions indexed by coweight translations. Rather than include our long proof, we will rely on their result.

\begin{prop}[Lam, Shimozono \cite{LS}]\label{mainprop} For a dominant coweight $\gamma$,
 $$\mathfrak{s}_{z_\gamma} = \sum_{\eta \in  \Gamma_\gamma} \uu({z_\eta}).
$$
 \end{prop}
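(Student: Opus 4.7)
The plan is to show that both sides lie in the affine Fomin--Stanley subalgebra $\mathbb{B}$ and agree on their Grassmannian support. Recall that the basis $\{\mathfrak{s}_w\}_{w \in W^0}$ of $\mathbb{B}$ satisfies $\mathfrak{s}_w = \uu(w) + \sum_{v \notin W^0} c_v \uu(v)$, so any element of $\mathbb{B}$ is determined by its projection onto the Grassmannian part of $\mathbb{A}_0$. It therefore suffices to prove (a) that $A := \sum_{\eta \in \Gamma_\gamma} \uu(z_\eta)$ belongs to $\mathbb{B}$, and (b) that $z_\gamma$ is the unique Grassmannian element among $\{z_\eta : \eta \in \Gamma_\gamma\}$.

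For (b), I would use the factorization $x_\lambda = t_\lambda v_\lambda^{-1}$ recorded just before~(\ref{eqn:Wext}), together with $z_\lambda = \overline{t_\lambda}^{-1}$ and the semidirect product structure $\Wext = \Waf \ltimes \Omega$. Since $x_\lambda$ is the minimal-length representative of $t_\lambda \Wfin$, passage through the isomorphism $\Wext / \Omega \cong \Waf$ shows that $z_\eta \in W^0$ exactly when $v_\eta = 1$, i.e.\ exactly when $\eta$ is the antidominant element of its $\Wfin$-orbit. Inverting, this singles out $\eta = \gamma$ in the orbit $\Gamma_\gamma$ when $\gamma$ is dominant and shows that every other $z_\eta$ acquires a nontrivial finite-Weyl-group factor and is therefore non-Grassmannian. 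The relation $\tau s_i = s_{\tau(i)} \tau$ of~(\ref{eqn:Wext}) guarantees that the reduction modulo $\Omega$ respects this dichotomy, so $\uu(z_\gamma)$ is the sole Grassmannian summand of $A$.

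The main obstacle is (a). My approach would be to invoke Peterson's characterization of $\mathbb{B}$ as a certain centralizer inside $\mathbb{A}_0$, reducing membership to checking a finite family of commutation conditions against each generator $\uu_i$ for $i \in \Iaf$. To verify these for $A$, I would compute $\uu_i A$ orbit-element by orbit-element and pair $\eta$ with $s_i \eta$, using the identity $t_{w\eta} = w t_\eta w^{-1}$ in $\Wext$ to relate the reduced words of $z_\eta$ and $z_{s_i\eta}$ up to a controlled $\Omega$-correction that is tracked through $\tau s_i = s_{\tau(i)} \tau$. The local cancellations along each $s_i$-pairing of $\Gamma_\gamma$ assemble into the global claim $A \in \mathbb{B}$; this bookkeeping---especially the careful matching across the $\Omega$-quotient---is what lengthens the argument, and is why the authors prefer to absorb it by quoting the geometric proof of Lam and Shimozono~\cite{LS}, which runs this verification on the $H_*(\Gr)$ side via the Pontryagin/Peterson product. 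Once (a) and (b) hold, uniqueness of the expansion in the basis $\{\mathfrak{s}_w\}_{w \in W^0}$ forces $A = \mathfrak{s}_{z_\gamma}$, completing the proof.
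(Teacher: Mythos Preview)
The paper does not prove this proposition at all: immediately before stating it, the authors write ``Rather than include our long proof, we will rely on their result,'' and then cite~\cite{LS}. There is therefore no in-paper proof to compare your proposal against.

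That said, your outline is the standard route and is almost certainly close to the authors' suppressed ``long proof.'' The two-step plan---(a) show the sum lies in $\mathbb{B}$, (b) show $z_\gamma$ is its unique Grassmannian term---is correct in principle, and the uniqueness property of the basis $\{\mathfrak{s}_w\}_{w\in W^0}$ then finishes it. Your argument for (b) is right in substance but the wording around ``antidominant'' followed by ``inverting'' is roundabout: more directly, $z_\eta^{-1}$ differs from $t_\eta$ by an element of $\Omega$, and $t_\eta$ is minimal in its left $\Wfin$-coset precisely when $\eta$ is dominant, so $z_\eta\in W^0$ exactly when $\eta=\gamma$. For (a), the pairing $\eta\leftrightarrow s_i\star\eta$ across the orbit is indeed the mechanism, but you must also handle the stabilizer case $s_i\star\eta=\eta$ separately (there the term is fixed rather than paired), and the length-additivity needed to conclude $\uu_i\,\uu(z_\eta)=\uu(z_{s_i\star\eta})\,\uu_{?}$ in the nilCoxeter algebra requires a genuine check---this is exactly the ``bookkeeping'' you flag.

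For comparison, the actual proof in~\cite{LS} proceeds geometrically through the Peterson isomorphism relating $H_*(\Gr)$ to quantum cohomology (via the Toda lattice), rather than by a direct combinatorial verification inside $\mathbb{A}_0$. So your approach is a different, more elementary route than the cited one; it trades the geometric machinery for a longer case-by-case commutation check, which is presumably why the present authors chose to cite rather than include it.
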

 
Proposition \ref{mainprop} is the starting point for a type-free combinatorial formula generalizing the one that appears in \cite{BBTZ}.
It should be noted though, that this formula does not give reduced words for the terms $z_\eta$; they are defined only as the image of translations in prescribed directions. In Theorem \ref{thm:combinatorial}, we will give a combinatorial description of the explicit reduced words which appear in this sum.

\subsection{Commutation for $k$-Schur functions}

Theorem $5.1$ of \cite{BBTZ} gives a nice commutation relation
for $k$-Schur functions in type $A$ and a generator of the affine nil-Coxeter algebra. 
In this section we will deduce a similar commutation property in Theorem 
\ref{thm: Mike's property} which will allow us to provide more explicit 
formulas for $\mathfrak{s}_w$. 
 
 \begin{defn}\label{gamma notation} We now fix some notation. $\gamma$ will denote the $j^{th}$ fundamental coweight $\Lambda_j^\vee$. If $t_\gamma = z_\gamma^{-1} \tau_\gamma$ then we let $t = t_\gamma, z = z_\gamma$, and $\tau = \tau_\gamma$. 
  \end{defn}
\todone{ {\bf CB:} In the statement of the theorem there is now an abuse of identifying $\Omega$ with dynkin diagram automorphisms. It would be very helpful if this correspondence was written in here.  }

\begin{lemma}\label{lemma:translates}
For a coweight $\gamma$, $z$ is the unique element of $W$ which satisfies $\mathcal{A}_z = \mathcal{A}_\emptyset + \gamma$. 
\end{lemma}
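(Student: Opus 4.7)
The plan is to unwind the definitions and use the fact that elements of $\Omega$ fix the fundamental alcove. By definition, $t_\gamma \in \Wext$ acts on $\hfin \otimes \R$ via translation by $\gamma$, so in particular $t_\gamma \diamond \mathcal{A}_\emptyset = \mathcal{A}_\emptyset + \gamma$. On the other hand, by Definition~\ref{gamma notation}, we may write $t_\gamma = z^{-1} \tau$ where $z \in \Waf$ and $\tau \in \Omega$. The lemma is then essentially the statement that the $\Omega$-factor contributes nothing to the alcove displacement.

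First I would verify that elements of $\Omega$ stabilize the fundamental alcove under the $\diamond$ action. This follows directly from the definition of length given in the section on Dynkin diagram automorphisms: $\ell(\tau)$ is the number of hyperplanes $H_{\alpha,k}$ separating $\mathcal{A}_\tau$ from $\mathcal{A}_\emptyset$, and $\Omega$ was defined as the length-zero elements. Since no hyperplane separates $\tau^{-1} \diamond \mathcal{A}_\emptyset$ from $\mathcal{A}_\emptyset$, these two alcoves must coincide, and hence $\tau \diamond \mathcal{A}_\emptyset = \mathcal{A}_\emptyset$ as well.

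Combining these two observations gives
\[
z^{-1} \diamond \mathcal{A}_\emptyset \;=\; z^{-1} \tau \diamond \mathcal{A}_\emptyset \;=\; t_\gamma \diamond \mathcal{A}_\emptyset \;=\; \mathcal{A}_\emptyset + \gamma,
\]
which is exactly the identity $\mathcal{A}_z = \mathcal{A}_\emptyset + \gamma$. For uniqueness, I would invoke the fact already recorded in the excerpt that $\mathcal{A}_\emptyset$ is a fundamental domain for the $\diamond$ action of $\Waf$: the map $w \mapsto \mathcal{A}_w = w^{-1} \diamond \mathcal{A}_\emptyset$ is a bijection between $\Waf$ and the set of alcoves, so at most one $w \in \Waf$ can satisfy $\mathcal{A}_w = \mathcal{A}_\emptyset + \gamma$.

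No step here looks genuinely difficult; the only substantive check is the claim that length-zero elements of $\Wext$ fix $\mathcal{A}_\emptyset$. If one preferred to avoid invoking the geometric definition of length, an alternative would be to argue directly from the isomorphism $\Omega \cong \Pfin^\vee / \Qfin^\vee$ together with the observation that $t_\gamma$ and $t_{\gamma'}$ send $\mathcal{A}_\emptyset$ to the same alcove when $\gamma - \gamma' \in \Qfin^\vee$ only after further correction by a finite Weyl group element; but the length-zero approach is cleaner and uses exactly what has already been established.
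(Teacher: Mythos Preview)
Your proof is correct and follows essentially the same approach as the paper's: both compute $\mathcal{A}_z = z^{-1}\diamond\mathcal{A}_\emptyset$ using the factorization $t_\gamma = z^{-1}\tau$, and both obtain uniqueness from the bijection between $\Waf$ and alcoves. The paper's proof is terser and does not explicitly justify why the $\Omega$-factor can be dropped; your argument that length-zero elements fix $\mathcal{A}_\emptyset$ supplies exactly that missing detail.
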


\begin{proof}
The alcove $\mathcal{A}_{z} = z^{-1} \diamond \mathcal{A}_\emptyset$ and $z = \overline{t}^{-1}$, where the action of $t$ corresponds to translation by $\gamma$. The uniqueness follows from the fact that $W$ is in bijection with the set of alcoves.
\end{proof}

\begin{prop}\label{prop:commute}
For $\gamma$, $z$, $\tau$ as in Definition \ref{gamma notation} and $w \in W$, $$ z_{w \star \gamma} w = \tau(w) z.$$
\end{prop}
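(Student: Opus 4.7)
The plan is to work inside $\Wext = \Waf \ltimes \Omega$ and match the two decompositions of $t_{w \star \gamma}$ coming from the right-hand and left-hand sides of the claim.

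First, I would show that $t_{w \star \gamma} = w\, t_\gamma\, w^{-1}$ inside $\Wext$, for any $w \in \Waf$. Writing $w = u\, t_\mu$ with $u \in \Wfin$ and $\mu \in \Qfin^\vee$, the $\star$-action of $w$ on $\gamma$ reduces to $u(\gamma)$ since translations act trivially via $\star$. On the other hand, translations commute with $t_\gamma$, so in $\Wext$ we have $w\, t_\gamma\, w^{-1} = u\, t_\gamma\, u^{-1} = t_{u(\gamma)} = t_{w \star \gamma}$, using the identity $t_{u(\gamma)} = u\, t_\gamma\, u^{-1}$ already noted in the paper.

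Next, I would substitute $t_\gamma = z^{-1}\tau$ and push $\tau$ past $w^{-1}$ using the commutation relation \eqref{eqn:Wext}. Iterating $\tau s_i = s_{\tau(i)}\tau$ along a word for $w^{-1}$ gives $\tau\, w^{-1} = \tau(w^{-1})\, \tau$. Therefore
\[
t_{w \star \gamma} = w\, z^{-1}\, \tau\, w^{-1} = \bigl(w\, z^{-1}\, \tau(w^{-1})\bigr)\, \tau,
\]
which exhibits $t_{w\star\gamma}$ as an element of $\Waf$ times the fixed element $\tau \in \Omega$.

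Finally, I would invoke the unique factorization $\Wext = \Waf \ltimes \Omega$ to read off the two components of $t_{w\star\gamma} = z_{w\star\gamma}^{-1}\, \tau_{w\star\gamma}$. The $\Omega$-component of $t_{w\star\gamma}$ is determined by the class of $w\star\gamma$ in $\Pfin^\vee/\Qfin^\vee$; since $\Wfin$ acts trivially on this quotient, $w \star \gamma \equiv \gamma \pmod{\Qfin^\vee}$, and hence $\tau_{w\star\gamma} = \tau$. Matching the $\Waf$-components then gives $z_{w\star\gamma}^{-1} = w\, z^{-1}\, \tau(w^{-1})$. Taking inverses, and using $\tau(w^{-1})^{-1} = \tau(w)$, yields $z_{w\star\gamma} = \tau(w)\, z\, w^{-1}$, which rearranges to the desired identity $z_{w\star\gamma}\, w = \tau(w)\, z$.

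The main obstacle is keeping track of conventions: ensuring that the $\star$-action through a general $w\in\Waf$ really does factor through its finite part, and that $\tau_{w\star\gamma}$ really equals $\tau$. Both points hinge on the triviality of translations under $\star$ and on the fact that $\Wfin$ permutes each coset of $\Qfin^\vee$ in $\Pfin^\vee$; once these are in hand the argument is a short formal manipulation in $\Wext$.
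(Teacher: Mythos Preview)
Your proof is correct and follows essentially the same approach as the paper: both compute in $\Wext$, use the conjugation identity for translations together with $t_\gamma = z^{-1}\tau$ and equation~\eqref{eqn:Wext}, and then read off the $\Waf$- and $\Omega$-components of the unique factorization. The only cosmetic differences are that the paper starts from $w\, t_{w^{-1}\star\gamma} = t_\gamma\, w$ and swaps $w \leftrightarrow w^{-1}$ at the end, and your separate coset argument for $\tau_{w\star\gamma}=\tau$ is redundant (though correct) since the displayed factorization $t_{w\star\gamma} = \bigl(w\, z^{-1}\, \tau(w^{-1})\bigr)\tau$ already exhibits the $\Omega$-part.
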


\begin{proof}

Let $w \in W$. In $\Wext$, we have $w t_{w^{-1} \star \gamma} = t w$. Let $t_{w^{-1}\star\gamma} = z_{w^{-1}\star \gamma}^{-1} \tau'$ for some $\tau' \in \Omega$. Then we have
\begin{align}
w z_{w^{-1}  \star \gamma}^{-1} \tau' &= z^{-1} \tau w,\\
w z^{-1}_{w^{-1}  \star \gamma} \tau' &= z^{-1} \tau(w) \tau,
\end{align}
with the last equality coming from Equation (\ref{eqn:Wext}).
Therefore, we must have $\tau' = \tau$, and \[w z^{-1}_{w^{-1} \star \gamma} = z^{-1} \tau(w).\] Inverting both sides and replacing $w^{-1}$ with $w$ gives the desired result. \end{proof}

The following theorem is a generalization of the commutation property for rectangular $k$-Schur functions found in \cite{BBTZ}.
\begin{thm}
\label{thm: Mike's property} Let $\gamma$ be a fundamental coweight, and let $w \in W$. Then
$$\mathfrak{s}_z \uu(w) = \uu(\tau(w)) \mathfrak{s}_z.$$ 
 \end{thm}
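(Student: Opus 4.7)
My plan is to prove the commutation relation by reducing to single-generator commutations and then matching terms bijectively using Propositions~\ref{mainprop} and~\ref{prop:commute}. The reduction is straightforward: if $\mathfrak{s}_z \uu_i = \uu_{\tau(i)}\mathfrak{s}_z$ holds for every $i \in \Iaf$, then along a reduced expression $w = s_{i_1}\cdots s_{i_\ell}$ iterating the commutation gives
\[ \mathfrak{s}_z \uu(w) = \mathfrak{s}_z \uu_{i_1}\cdots\uu_{i_\ell} = \uu_{\tau(i_1)}\cdots\uu_{\tau(i_\ell)}\mathfrak{s}_z = \uu(\tau(w))\mathfrak{s}_z, \]
where the final equality uses that $\tau$ is length-preserving, so $s_{\tau(i_1)}\cdots s_{\tau(i_\ell)}$ remains reduced for $\tau(w)$.

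For the single-generator case, expanding via Proposition~\ref{mainprop} turns the desired identity into
\[ \sum_{\eta \in \Gamma_\gamma}\uu(z_\eta)\uu_i \;=\; \sum_{\eta \in \Gamma_\gamma}\uu_{\tau(i)}\uu(z_\eta). \]
For $\eta = v\star\gamma$ with $v \in \Wfin$, define $\sigma_i(\eta) := (s_i v)\star\gamma$. Two applications of Proposition~\ref{prop:commute} (with $w=v$, giving $z_\eta = \tau(v) z v^{-1}$, and with $w = s_i v$, giving $z_{\sigma_i(\eta)} = \tau(s_i v) z (s_i v)^{-1} = s_{\tau(i)} \tau(v) z v^{-1} s_i$) combined with $\tau(s_i) = s_{\tau(i)}$ yield the key identity
\[ z_\eta \, s_i = s_{\tau(i)}\, z_{\sigma_i(\eta)} \qquad \text{in } \Waf. \]
The map $\sigma_i$ is a well-defined involution on $\Gamma_\gamma$: independence from the choice of $v$ follows because $\mathrm{Stab}_{\Wfin}(\gamma)$ is stable under left multiplication by $s_i$, and $\sigma_i^2 = \mathrm{id}$ since $s_i^2 = 1$.

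The bijective matching is completed by the length-homogeneity observation that every $z_\eta$, $\eta \in \Gamma_\gamma$, has the same length $\ell(z_\eta) = \ell(z)$. This holds because $\mathfrak{s}_z = j_0(\xi_z)$ is the image of a single Schubert class under the graded map $j_0$, so $\mathfrak{s}_z$ is homogeneous of degree $\ell(z)$ in the nilCoxeter algebra (where each $\uu_i$ has degree $1$); equivalently, $\ell(t_\eta) = \sum_{\alpha \in \Phi_{\text{fin}}^+} |\langle\eta,\alpha\rangle|$ is constant on the $\Wfin$-orbit of $\gamma$. Since $\uu(a)\uu(b) = \uu(ab)$ when the product is reduced and vanishes otherwise, the identity $z_\eta s_i = s_{\tau(i)} z_{\sigma_i(\eta)}$ together with $\ell(z_\eta) = \ell(z_{\sigma_i(\eta)})$ forces $\uu(z_\eta)\uu_i$ and $\uu_{\tau(i)}\uu(z_{\sigma_i(\eta)})$ to be simultaneously equal to the common reduced product $\uu(z_\eta s_i)$ or simultaneously zero. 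The involution $\sigma_i$ thus realizes a term-by-term matching of the two sums.

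The main obstacle is the case $i = 0$: since $s_0 \notin \Wfin$, the prescription $\sigma_0(v\star\gamma) = (s_0 v)\star\gamma$ need not a priori produce an element of $\Gamma_\gamma = \Wfin\gamma$. Here one must either extend Proposition~\ref{prop:commute} to accommodate the full affine $w$, or verify directly using the action of $s_0$ on finite coweights (via $\alpha_0^\vee = -\sum_{i \in \Ifin}\alpha_i^\vee$) that $\sigma_0$ still takes $\Gamma_\gamma$ to itself, so that the same bijective argument applies uniformly across all simple generators.
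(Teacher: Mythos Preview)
Your argument is essentially the paper's one-line proof (``follows from Propositions~\ref{prop:commute} and~\ref{mainprop}'') written out in detail, and it is correct.  Two remarks tie off the loose ends you flag.  First, the $i=0$ ``obstacle'' is illusory: Proposition~\ref{prop:commute} is already stated for arbitrary $w\in W$ (not just $\Wfin$), and by the Remark immediately after it translations act trivially under~$\star$, so for any $w\in W$ one has $w\star\eta\in\Wfin\star\eta=\Gamma_\gamma$; in particular $\sigma_0$ preserves $\Gamma_\gamma$ automatically.  Second---and for the same reason---the reduction to simple generators is unnecessary: applying Proposition~\ref{prop:commute} once with $w=v$ and once with $w$ replaced by $wv$ gives $\tau(w)\,z_\eta=z_{w\star\eta}\,w$ for every $\eta=v\star\gamma\in\Gamma_\gamma$ and every $w\in W$, and $\eta\mapsto w\star\eta$ is the term-matching bijection on $\Gamma_\gamma$ directly.  (A small correction: your justification that $\mathrm{Stab}_{\Wfin}(\gamma)$ is ``stable under left multiplication by $s_i$'' is not the right reason and is generally false; well-definedness is immediate once you observe $\sigma_i(\eta)=s_i\star(v\star\gamma)=s_i\star\eta$.)
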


\begin{proof}
This follows from Proposition \ref{prop:commute} and Proposition \ref{mainprop}.
\end{proof}

\todone{{\bf NB:} The proof is right but it is not a clear proof for me. One would like to see explicitly a bijection between the terms in $s_{i}X_\gamma$ and the one in $X_\gamma s_{\tau(i)}$. I will try to work on it when I find time.
}

\subsection{An algebraic formula}
  
We let $W_{0,j}$ denote the subgroup of $W$ generated by the simple reflections $s_i$ with $i \neq 0,j$ and let $W_0^j$ denote the set of  minimal length coset representatives of $W_0 / W_{0,j}$. This subsection provides another formula for the $k$-Schur functions which correspond to fundamental coweights. 
 
 \begin{remark}\label{remark:cosetbijection} Let $\gamma$ denote the $j^{th}$ fundamental coweight, as in Definition \ref{gamma notation}. Then $\Gamma_{\gamma}$ is naturally identified with $W_0^j$. We can construct a bijection between $W_0^j$ and $\Gamma_{\gamma}$ as follows. First we give a map from $W_0$ to $\Gamma_\gamma$: for $v \in W_0$, we define a map $v \rightarrow v(\gamma)$. This map is clearly onto; $\Gamma_\gamma$ is defined to be the image of this map. From equation (\ref{eq:staraction}), we see that $s_i \star \gamma = \gamma$ for $i\neq 0,j$. Therefore, $W_0/W_{0,j}$ is in bijection with $\Gamma_\gamma$. 
  \end{remark}
  
\begin{lemma}\label{lemma:levelaction} Let $w \in W$ and $\mu, \nu \in \haf$. The two actions $\star$ and $\diamond$ are related by:
\[w \diamond (\mu+\nu) = w\diamond \mu + w \star \nu.\] 
\end{lemma}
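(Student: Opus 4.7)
The key structural observation is that the $\diamond$ action differs from the $\star$ action only by a translation that depends on $w$ alone. Precisely, I would prove the auxiliary claim that there is a function $f: W \to \hfin \otimes \mathbb{R}$ such that $w \diamond \mu = w \star \mu + f(w)$ for every $\mu$. Given this, the lemma is immediate: using linearity of $\star$,
\[ w \diamond (\mu+\nu) = w \star (\mu+\nu) + f(w) = (w \star \mu + f(w)) + w \star \nu = w \diamond \mu + w \star \nu. \]

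The content is therefore to verify the auxiliary affine-ness claim, which I would do by induction on $\ell(w)$. The base case $w = e$ is trivial with $f(e) = 0$. For the single-generator case, if $i \neq 0$ the two actions coincide and linearity of $\star$ gives $s_i \diamond (\mu+\nu) = s_i \star \mu + s_i \star \nu = s_i \diamond \mu + s_i \star \nu$; if $i = 0$, then
\[ s_0 \diamond (\mu + \nu) = s_0 \star (\mu+\nu) - \alpha_0^\vee = (s_0 \star \mu - \alpha_0^\vee) + s_0 \star \nu = s_0 \diamond \mu + s_0 \star \nu, \]
so the generator identity holds in both cases.

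For the inductive step, write $w = s_i w'$ with $\ell(w') = \ell(w)-1$. Then applying the inductive hypothesis to $w'$ followed by the generator identity (with $\alpha = w'\diamond \mu$ and $\beta = w'\star \nu$) gives
\[ w \diamond (\mu+\nu) = s_i \diamond (w'\diamond \mu + w'\star \nu) = s_i \diamond (w'\diamond \mu) + s_i \star (w'\star \nu) = w\diamond \mu + w\star \nu, \]
which both closes the induction and, by taking $\nu$ arbitrary and $\mu = 0$, confirms that $w \diamond \mu - w \star \mu$ is independent of $\mu$.

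There is no real obstacle here: the lemma is essentially the statement that the $\diamond$ action is affine with linear part $\star$, and this just unpacks the definition one generator at a time. The only thing to be careful about is the mild abuse that the $\diamond$ action is defined on $\hfin \otimes \mathbb{R}$ while the lemma quantifies over $\mu,\nu \in \haf$; I would read this as applying after projecting to $\hfin \otimes \mathbb{R}$ (equivalently, modding out by the kernel of the $\diamond$ action), which does not affect the argument above.
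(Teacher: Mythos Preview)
Your proof is correct and follows the same approach as the paper: verify the identity on each generator $s_i$ (splitting into the cases $i \neq 0$ and $i = 0$), with the paper leaving the routine inductive extension to arbitrary $w$ implicit where you spell it out. Your additional framing in terms of the $\diamond$ action being affine with linear part $\star$ is a nice way to package the same computation.
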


\begin{proof}
We prove this on the generators $s_i$. If $i$ is not zero, then $s_i$ is linear and the two actions agree, so there is nothing to prove.   If $i=0$, then 
$$s_0 \diamond (\mu + \nu) = s_0 \star (\mu + \nu) - \alpha_0^\vee = (s_0 \star \mu - \alpha_0^\vee) + s_0 \star \nu = s_0 \diamond \mu + s_0 \star \nu.\qedhere$$  \end{proof}

The following proposition is a stepping stone to proving our main theorem; It is used to connect Proposition \ref{mainprop} to Theorem \ref{thm:combinatorial}.
  
 \begin{prop}\label{algebraic_formula} Let $\gamma$ be a fundamental coweight as in Def \ref{gamma notation}. Then
\[\mathfrak{s}_{z} = \sum_{v \in W_0^j} \uu({\tau(v) z v^{-1}}).\]
\end{prop}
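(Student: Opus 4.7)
The plan is to combine the Lam--Shimozono formula of Proposition \ref{mainprop} with the commutation identity of Proposition \ref{prop:commute}, using the parametrization of $\Gamma_\gamma$ by $W_0^j$ from Remark \ref{remark:cosetbijection} to reindex the sum. In particular, the proof should be a direct substitution exercise — all of the nontrivial content has already been established.

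To begin, Proposition \ref{mainprop} gives
\[\mathfrak{s}_z = \sum_{\eta \in \Gamma_\gamma} \uu(z_\eta).\]
Remark \ref{remark:cosetbijection} shows that since the $\star$-stabilizer of $\gamma = \Lambda_j^\vee$ in $W_0$ is precisely $W_{0,j}$ (because $s_i \star \gamma = \gamma$ for $i \neq 0, j$), the map $v \mapsto v \star \gamma$ descends to a bijection $W_0^j \xrightarrow{\sim} \Gamma_\gamma$. Using this bijection to reindex, the sum becomes $\sum_{v \in W_0^j} \uu(z_{v \star \gamma})$.

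Next, I would apply Proposition \ref{prop:commute} with $w = v$, which yields the equation $z_{v \star \gamma} v = \tau(v) z$ in $W$. Solving for the first factor gives the equality $z_{v \star \gamma} = \tau(v) z v^{-1}$ as elements of the affine Weyl group. Substituting this into the reindexed sum yields
\[\mathfrak{s}_z = \sum_{v \in W_0^j} \uu(\tau(v) z v^{-1}),\]
which is the desired formula.

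There is no real obstacle in this argument, since the heavy lifting is already done in Propositions \ref{mainprop} and \ref{prop:commute}. The one subtle point worth flagging is that $\uu(\tau(v) z v^{-1})$ must be interpreted as $\uu$ applied to the \emph{Weyl group element} $\tau(v) z v^{-1}$ (which equals $z_{v \star \gamma}$), not to the literal product of three words; since $\mathbb{A}_0$ satisfies the braid relations, $\uu$ is well-defined on group elements, so the expression makes sense regardless of whether the concatenation $\tau(v) \cdot z \cdot v^{-1}$ happens to be reduced. Producing explicit reduced words for these elements is precisely the task taken up in the combinatorial Theorem \ref{thm:combinatorial}.
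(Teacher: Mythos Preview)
Your proof is correct and uses the same ingredients as the paper: Proposition~\ref{mainprop}, Remark~\ref{remark:cosetbijection}, and Proposition~\ref{prop:commute}. In fact your argument is slightly cleaner than the paper's own: after citing Proposition~\ref{prop:commute} to obtain $\tau(v)z = z_{v\star\gamma}v$, the paper proceeds through an alcove computation invoking Lemmas~\ref{lemma:levelaction} and~\ref{lemma:translates} to conclude $\tau(v)zv^{-1} = z_{v\star\gamma}$, whereas you simply cancel $v$ on the right---which is all that is needed, since Proposition~\ref{prop:commute} is already an identity in $W$.
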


\begin{proof}
We will use Proposition \ref{mainprop}; we show that each $\tau(v) z v^{-1}$ is in fact $z_{v \star\gamma}$.

Let $w = \tau(v) z v^{-1}$. We compute \[ \displaystyle \mathcal{A}_{wv} = \mathcal{A}_{\tau(v) z} = \mathcal{A}_{z_{v \star \gamma} v } = v^{-1} z_{v \star \gamma}^{-1} \diamond \mathcal{A}_\emptyset = v^{-1} \diamond (\mathcal{A}_\emptyset + v \star \gamma ) = \mathcal{A}_v + \gamma,\]
where the second equivalence comes from Proposition \ref{prop:commute} and the last two use Lemma \ref{lemma:levelaction}.
Applying $v$ to the left and right of the equation above yields $\mathcal{A}_w = \mathcal{A}_\emptyset + v \star \gamma$. By Lemma \ref{lemma:translates}, $w = z_{v \star \gamma}$. Combined with Remark \ref{remark:cosetbijection}, this concludes the proof.
\end{proof}

\subsection{Towards a general combinatorial formula}

We will outline in this section how to build a combinatorial formula for the $k$-Schur functions indexed by a fundamental coweight. Section \ref{type c} will give more explicit formulas in affine type $C$.

\begin{defn}
A set of combinatorial objects $\mathcal{R}$ will be called a \textit{combinatorial affine Grassmannian set for $W$} if:

\begin{itemize}
\item There is a transitive action of $W$ on $\mathcal{R}$.
\item There exists an element $\emptyset \in \mathcal{R}$  which satisfies $W_0 \emptyset = \{ \emptyset \}$. 
\item The map $W^0 \rightarrow \mathcal{R}$ defined by $w \rightarrow w\emptyset$ is a bijection. 
\end{itemize} 
\end{defn}

Given a combinatorial affine Grassmannian set $\mathcal{R}$, $\mu \in \mathcal{R}$, and the above bijection, we define $w_\mu \in W^0$ by $w_\mu\emptyset = \mu$.

\begin{remark} There is another way of calculating the location of an alcove $\mathcal{A}_w$ given a reduced word of the element $w = s_{i_1} s_{i_2} \cdots s_{i_r}$
that we picture as an \textit{alcove walk}. Given a word $w = s_{i_1} s_{i_2} \cdots s_{i_r}$, the location of $\uu({w})$ 
is calculated by a path starting at $\mathcal{A}_\emptyset$ followed by the alcove $\mathcal{A}_{s_{i_r}},$ then 
\[\mathcal{A}_{s_{i_{r-1}} s_{i_r}},
\mathcal{A}_{s_{i_{r-2}} s_{i_{r-1}} s_{i_r}}, \ldots, \mathcal{A}_{s_{i_{1}} s_{i_{2}}\cdots s_{i_{r-1}} s_{i_r}}.\]
Each of these alcoves is adjacent (see \cite[Proposition 1.1]{BBTZ}) and the word for $w$ determines a
path which travels from the fundamental alcove to $\mathcal{A}_w$ traversing a single hyperplane for each simple reflection in the word. 
\end{remark}

\begin{eg} An example of such a walk which corresponds to the reduced word
$s_2 s_1 s_2 s_1 s_0 s_1 s_0 s_2 s_1 s_0$ appears below.  Each hyperplane is colored according to the simple reflection that corresponds to a crossing of that hyperplane; e.g., crossing a green hyperplane corresponds to an $s_0$, a red hyperplane corresponds to an $s_1$, and a blue hyperplane corresponds to an $s_2$.
\begin{center}
\includegraphics[width=4in]{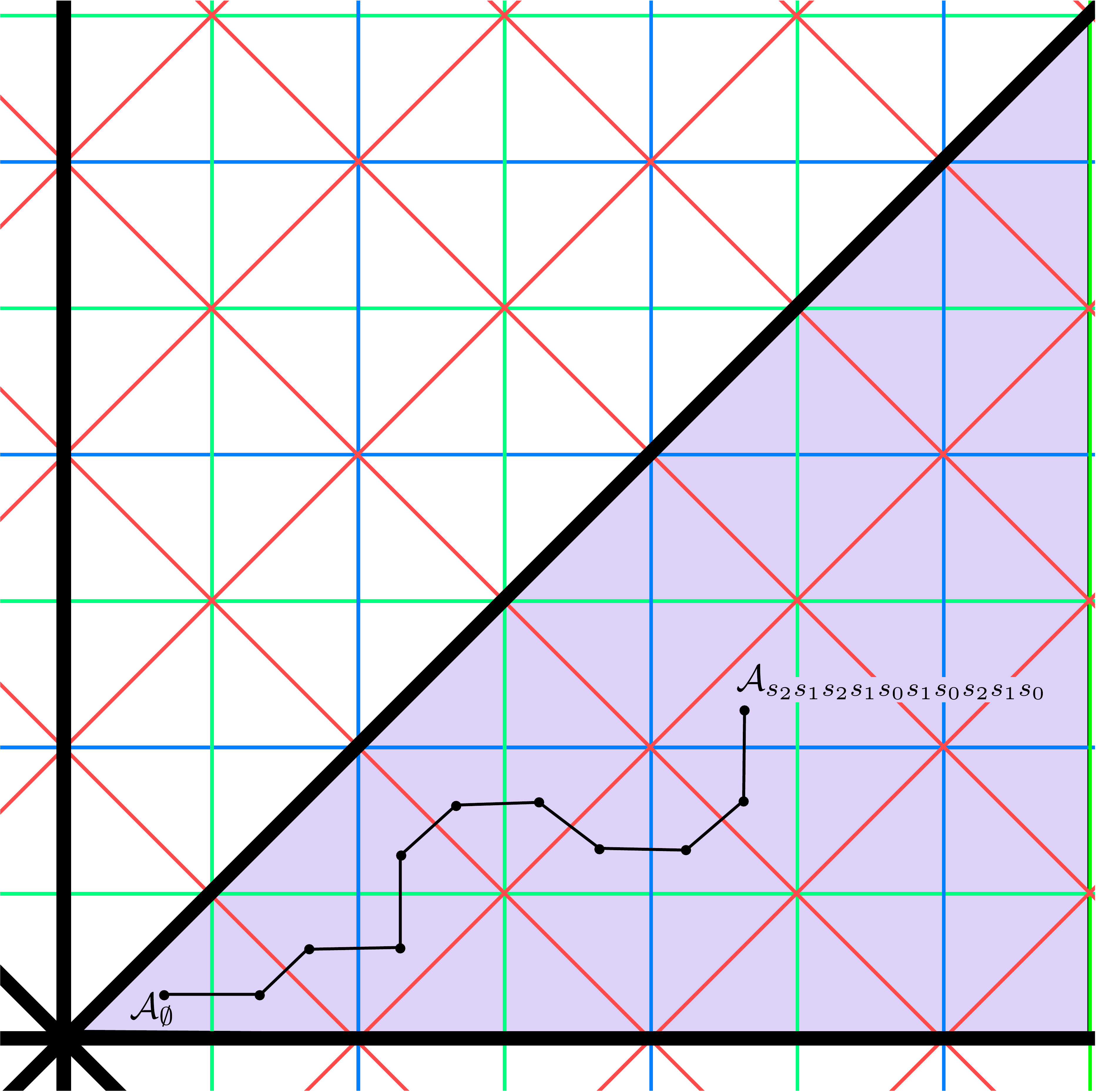}
\end{center}
In the diagram above, the path represents a particular reduced word
for the element of $W^0$ of type $C_2$.  The vertices of this path are in
correspondence with the sequence of alcoves:
$\mathcal{A}_\emptyset \rightarrow 
\mathcal{A}_{s_0} \rightarrow 
\mathcal{A}_{s_1 s_0} \rightarrow 
\mathcal{A}_{s_2 s_1 s_0} \rightarrow 
\mathcal{A}_{s_0 s_2 s_1 s_0} \rightarrow 
\mathcal{A}_{s_1 s_0 s_2 s_1 s_0} \rightarrow 
\mathcal{A}_{s_0 s_1 s_0 s_2 s_1 s_0} \rightarrow 
\mathcal{A}_{s_1 s_0 s_1 s_0 s_2 s_1 s_0} \rightarrow 
\mathcal{A}_{s_2 s_1 s_0 s_1 s_0 s_2 s_1 s_0} \rightarrow 
\mathcal{A}_{s_1 s_2 s_1 s_0 s_1 s_0 s_2 s_1 s_0} \rightarrow 
\mathcal{A}_{s_2 s_1 s_2 s_1 s_0 s_1 s_0 s_2 s_1 s_0}$.
\end{eg}

\todone{{\bf CB:} Mike, want to draw us a type 'C' alcove walk picture? =)
{\bf MZ}: 2 or 3 dimensions? :)  I'll work on it. }

We can define $x \in \hfin \otimes \R$ to be on the \emph{positive} or \emph{negative} side of the hyperplane $H_j := H_{\alpha_j,0}$ by $\langle x, \alpha_j \rangle > 0$ or $\langle x, \alpha_j \rangle < 0$, respectively.

\begin{lemma}\label{lemma:crosstwice} (see for instance \cite{W})
Minimal length expressions of $w \in W$ correspond to alcove walks which do 
not cross the same affine hyperplane twice.
\end{lemma}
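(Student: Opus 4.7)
The plan is to count hyperplane crossings in the alcove walk associated to a word for $w$ and compare with $\ell(w)$, which (by the alternate definition of length recalled earlier) is the number of hyperplanes separating $\mathcal{A}_\emptyset$ from $\mathcal{A}_w$. First I would fix any expression $w = s_{i_1} s_{i_2} \cdots s_{i_r}$ together with its associated alcove walk $\mathcal{A}_\emptyset \to \mathcal{A}_{s_{i_r}} \to \cdots \to \mathcal{A}_w$. Since consecutive alcoves along the walk are adjacent (share a codimension-one wall, by Proposition 1.1 of \cite{BBTZ}), each of the $r$ steps crosses exactly one affine hyperplane. Writing $n(H)$ for the number of times the walk crosses a given hyperplane $H$, this gives $r = \sum_H n(H)$.

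The heart of the argument is a parity observation. Each alcove sits strictly on one side of every hyperplane in $\mathcal{H}$ (hyperplanes never pass through the interior of an alcove), and each step of the walk toggles the side of exactly one hyperplane. Therefore $n(H)$ is odd whenever $H$ separates $\mathcal{A}_\emptyset$ from $\mathcal{A}_w$ and even otherwise. In the separating case $n(H)\ge 1$, so
$$ r \;=\; \sum_H n(H) \;\geq\; \#\bigl\{H \in \mathcal{H} : H \text{ separates } \mathcal{A}_\emptyset \text{ from } \mathcal{A}_w\bigr\} \;=\; \ell(w), $$
with equality precisely when $n(H)\in\{0,1\}$ for every $H$ --- equivalently, when the walk crosses no hyperplane twice. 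Since reducedness of the expression is exactly the condition $r = \ell(w)$, this gives the lemma in both directions.

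The only real content, and the main point needing care, is the parity observation: any two alcoves adjacent across a wall in a hyperplane $H$ lie on opposite sides of $H$, and every alcove lies in the open complement of $\mathcal{H}$ so that ``side'' is unambiguous at the endpoints. Both facts are standard consequences of the $\diamond$-action of $\Waf$ on $\hfin \otimes \mathbb{R}$ being generated by the reflections across the $H_{\alpha,k}$, so I would cite them briefly rather than reprove them. With this in hand, the inequality above is immediate and the characterization of equality completes the proof.
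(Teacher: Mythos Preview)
Your argument is correct and is in fact the standard one. Note, however, that the paper does not supply its own proof of this lemma: it is stated with a parenthetical reference to \cite{W} and left at that, so there is no in-paper argument to compare against. Your parity/counting approach---each step of the walk crosses exactly one hyperplane, the parity of the crossing count for a given $H$ detects whether $H$ separates $\mathcal{A}_\emptyset$ from $\mathcal{A}_w$, and hence $r\ge \ell(w)$ with equality exactly when no $n(H)$ exceeds $1$---is precisely the classical proof one finds in the literature (e.g.\ Humphreys or the reference \cite{W}), so nothing is missing.
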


\begin{lemma}\label{lemma:jgrass}[See for instance \cite{carter}]
Let $j \in \{1,2, \dots, k \}$. Then $w$ has a right $j$ descent ($w s_j < w$) if and only if the alcove $\mathcal{A}_w$ is on the negative side of the hyperplane $H_j$. 
\end{lemma}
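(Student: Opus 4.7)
The plan is to translate the descent condition $ws_j < w$ into a statement about which side of $H_j$ the alcove $\mathcal{A}_w$ lies on, via the alcove model of length reviewed in the preceding subsection.

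First I would compute $\mathcal{A}_{ws_j}$ in terms of $\mathcal{A}_w$. By definition $\mathcal{A}_{ws_j} = (ws_j)^{-1} \diamond \mathcal{A}_\emptyset = s_j \diamond \mathcal{A}_w$, and since the $\diamond$ action of $s_j$ on $\hfin \otimes \R$ is reflection in the hyperplane $H_j = H_{\alpha_j,0}$, the alcove $\mathcal{A}_{ws_j}$ is obtained from $\mathcal{A}_w$ by reflecting across $H_j$. In particular $H_j$ is precisely the hyperplane separating $\mathcal{A}_w$ from $\mathcal{A}_{ws_j}$, and the two alcoves are on opposite sides of $H_j$.

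Next I would invoke the length-as-hyperplane-count description: $\ell(u)$ equals the number of affine hyperplanes separating $\mathcal{A}_u$ from $\mathcal{A}_\emptyset$. Passing from $\mathcal{A}_w$ to $\mathcal{A}_{ws_j}$ either adds or removes exactly one separating hyperplane, namely $H_j$, compared with $\mathcal{A}_\emptyset$. Hence $\ell(ws_j) = \ell(w) - 1$ precisely when $H_j$ separates $\mathcal{A}_w$ from $\mathcal{A}_\emptyset$, and $\ell(ws_j) = \ell(w) + 1$ otherwise.

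Finally I would identify the side: the fundamental alcove $\mathcal{A}_\emptyset$ is cut out by the inequalities $\langle x,\alpha_i\rangle > 0$ for $i \in \Ifin$ together with $\langle x,\theta\rangle < 1$, so in particular $\mathcal{A}_\emptyset$ lies on the positive side of $H_j$. Combining this with the previous paragraph, $H_j$ separates $\mathcal{A}_w$ from $\mathcal{A}_\emptyset$ iff $\mathcal{A}_w$ lies on the negative side of $H_j$, which by the above is equivalent to $ws_j < w$. This gives the desired equivalence.

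The argument is essentially bookkeeping once the alcove picture is set up; the only point requiring a little care is that the left $\diamond$ action of $s_j$ really is the geometric reflection in $H_j$ (so that $\mathcal{A}_{ws_j}$ and $\mathcal{A}_w$ are reflections of each other across $H_j$), but this is immediate from the definition of the $\diamond$ action on generators and the identification $\mathcal{A}_w = w^{-1} \diamond \mathcal{A}_\emptyset$. I do not anticipate any serious obstacle.
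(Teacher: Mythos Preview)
Your computation $\mathcal{A}_{ws_j} = s_j \diamond \mathcal{A}_w$ is correct, and so is the observation that the two alcoves lie on opposite sides of $H_j$. The gap is the next sentence: you assert that $H_j$ is \emph{the} hyperplane separating $\mathcal{A}_w$ from $\mathcal{A}_{ws_j}$, and hence that the separating-hyperplane count changes by exactly one, namely $H_j$. That is false in general. Under the paper's convention $\mathcal{A}_w = w^{-1}\diamond\mathcal{A}_\emptyset$, it is \emph{left} multiplication by a simple reflection that produces an adjacent alcove (this is exactly what the alcove-walk remark says), not right multiplication. The reflection $s_j$ can carry $\mathcal{A}_w$ across many hyperplanes at once. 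Already in rank~$1$ (type $\tilde A_1$) with $w = s_0 s_1$ and $j=1$ one has $\mathcal{A}_w = (-2,-1)$ and $\mathcal{A}_{ws_1} = \mathcal{A}_{s_0} = (1,2)$: these are separated by three hyperplanes, not one, and the change in $\ell$ is the net $-2+1 = -1$, not a single toggle at $H_1$. So your hyperplane-count bookkeeping does not go through as stated.

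The paper does not print a proof (it refers to Carter), but the argument it has in mind, consistent with Lemma~\ref{lemma:crosstwice} just above, runs via alcove walks rather than via the reflection $\mathcal{A}_{ws_j}=s_j\mathcal{A}_w$. If $ws_j<w$, choose a reduced expression $w=s_{i_1}\cdots s_{i_{m}}s_j$; the alcove walk for this word begins with the step $\mathcal{A}_\emptyset\to\mathcal{A}_{s_j}$, which crosses $H_j$ to the negative side, and by Lemma~\ref{lemma:crosstwice} a reduced walk never recrosses the same hyperplane, so $\mathcal{A}_w$ stays on the negative side. Conversely, if $\mathcal{A}_w$ lies on the negative side of $H_j$, start a walk with the step across $H_j$ and continue with any minimal gallery from $\mathcal{A}_{s_j}$ to $\mathcal{A}_w$; this gallery cannot recross $H_j$, so the concatenated walk is minimal and yields a reduced word for $w$ ending in $s_j$. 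If you want to keep your reflection viewpoint, you would need an extra argument pairing up the non-$H_j$ hyperplanes crossed so that their contributions cancel; the alcove-walk route avoids this entirely.
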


\begin{lemma}\label{lemma:injective}
For all $v \in W_0^j$, $\tau(v) z \in W^0$. 
 \end{lemma}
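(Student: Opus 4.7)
The plan is to reduce the claim to a condition on alcove positions via Lemma \ref{lemma:jgrass}: namely, $\tau(v)z \in W^0$ is equivalent to $\mathcal{A}_{\tau(v)z}$ lying on the positive side of the hyperplane $H_i := H_{\alpha_i,0}$ for every $i \in \Ifin$. The first step is to identify this alcove. Repeating the computation already performed inside the proof of Proposition \ref{algebraic_formula}, using Proposition \ref{prop:commute} (so that $\tau(v)z = z_{v\star\gamma} v$), Lemma \ref{lemma:translates}, and Lemma \ref{lemma:levelaction}, one obtains
\[
\mathcal{A}_{\tau(v)z} \;=\; v^{-1} \diamond \bigl(\mathcal{A}_\emptyset + v\star\gamma\bigr) \;=\; \mathcal{A}_v + \gamma.
\]
So what needs to be checked is that $\mathcal{A}_v + \gamma$ lies on the positive side of every $H_i$ with $i \in \Ifin$.

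Fix $i \in \Ifin$ and an arbitrary point $x = y + \gamma$ with $y \in \mathcal{A}_v$; we want $\langle x, \alpha_i \rangle > 0$. When $i \neq j$, the defining property of the fundamental coweight $\gamma = \Lambda_j^\vee$ gives $\langle \gamma, \alpha_i \rangle = 0$, so $\langle x, \alpha_i \rangle = \langle y, \alpha_i \rangle$, and the problem reduces to showing $\mathcal{A}_v$ is on the positive side of $H_i$. Since $v$ is a minimal length representative in $W_0^j$, it satisfies $vs_i > v$ for every $i \in \Ifin \setminus \{j\}$, and Lemma \ref{lemma:jgrass} (applied with $i$ in place of $j$) gives exactly this.

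The remaining and most delicate case is $i = j$, where $\langle x, \alpha_j \rangle = \langle y, \alpha_j\rangle + 1$, so I must show the apparently stronger bound $\langle y, \alpha_j \rangle > -1$ for all $y \in \mathcal{A}_v$. Writing $y = v^{-1} \diamond y_0$ with $y_0 \in \mathcal{A}_\emptyset$, and using that $v \in \Wfin$ acts linearly so that the $\diamond$- and $\star$-actions coincide and preserve the pairing, this becomes $\langle y_0, v \star \alpha_j \rangle > -1$. Since $v \in \Wfin$ sends the finite simple root $\alpha_j$ to a finite real root, there are two subcases: if $v\star\alpha_j$ is a positive finite root, then $y_0$ lies in the dominant Weyl chamber and the pairing is already positive; if $v\star\alpha_j = -\beta$ for a positive finite root $\beta$, then $\beta \leq \theta$ in dominance order, so $\langle y_0, \beta \rangle \leq \langle y_0, \theta \rangle < 1$ because $\mathcal{A}_\emptyset$ is bounded above by the hyperplane $H_{\theta,1}$. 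Either way $\langle y, \alpha_j \rangle > -1$, completing the verification.

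The main obstacle is the $i = j$ case: one cannot simply invoke Lemma \ref{lemma:jgrass}, since $\mathcal{A}_v$ is allowed to sit on the negative side of $H_j$ (indeed, this happens exactly when $vs_j < v$). What rescues the argument is that the fundamental alcove is trapped between the walls $H_{\theta,0}$ and $H_{\theta,1}$, together with the dominance of $\theta$ over every positive finite root, which limits how far $\mathcal{A}_v$ can intrude into the negative half-space of $H_j$ — precisely enough for the translation by $\gamma$ to push it back into the dominant chamber.
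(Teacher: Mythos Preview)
Your proof is correct and follows essentially the same route as the paper: compute $\mathcal{A}_{\tau(v)z} = \mathcal{A}_v + \gamma$ via Proposition~\ref{prop:commute} and Lemma~\ref{lemma:levelaction}, then use Lemma~\ref{lemma:jgrass} to handle the indices $i \neq j$. The only difference is in the $i=j$ case: the paper simply observes that every alcove $\mathcal{A}_v$ with $v \in W_0$ has the origin as a vertex and hence satisfies $\langle x,\alpha_j\rangle \geq -1$, whereas you unpack this bound algebraically via $\langle y_0, v\star\alpha_j\rangle$ and the dominance of the highest root $\theta$ over all positive roots --- this is a more explicit justification of the same geometric fact.
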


\begin{proof}
By Proposition \ref{prop:commute}, $\tau(v) z = z_{v \star \gamma} v$. Therefore, the alcove \[ \mathcal{A}_{\tau(v) z} = \mathcal{A}_{z_{v \star \gamma} v} = (z_{v \star \gamma}v)^{-1} \diamond \mathcal{A}_\emptyset = \] \[ v^{-1} z^{-1}_{v \star \gamma} \diamond \mathcal{A}_\emptyset =  v^{-1} \diamond (\mathcal{A}_\emptyset + v \star \gamma) = \mathcal{A}_{v} + \gamma,\] by Lemma \ref{lemma:levelaction}. Since $v \in W_0^j \subset W^j$, the only right descent of $v$ is a $j$ descent, so for $x \in \mathcal{A}_v$ and $i \neq 0,j$ we have $\langle x, \alpha_i \rangle \geq 0$, by Lemma \ref{lemma:jgrass}. Furthermore, $v \in W_0^j \subset W_0$, so $\langle x, \alpha_j \rangle \geq -1$ for $x \in \mathcal{A}_v$ 
(as every alcove corresponding to $v \in W_0$ has a vertex at the origin). Combining these two facts, we get that $\langle x + \gamma, \alpha_i \rangle \geq 0$ for all $i \neq 0$ (since $\langle \gamma, \alpha_j \rangle \geq 1$). Therefore, the alcove $\mathcal{A}_v + \gamma$ is dominant, so the corresponding element is Grassmannian, i.e. $\tau(v) z \in W^0$. 
\end{proof} 

 We let $w_0^j$ be the (unique) maximal length element of $W_0^j$. The set $\mathcal{R}$ inherits a partial order from $W^0$; for $\mu, \nu \in \mathcal{R}$ we say $\mu \leq \nu$ whenever $w_\mu \leq w_\nu$.
For $\mu, \nu \in \mathcal{R}$ with $\nu \leq \mu$, we let $w_{\mu / \nu}:=w_\mu w^{-1}_\nu$.
  
 \begin{thm}\label{thm:combinatorial} Let $R = z \emptyset$ and $S = \tau(w_0^j) z\emptyset$. Then
 $$ \displaystyle \mathfrak{s}_{z} = \sum_{S \leq \lambda \leq R} \uu({w_\lambda \tau^{-1}(w_{R/\lambda})}).$$
 \end{thm}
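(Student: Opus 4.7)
The plan is to deduce the formula from Proposition \ref{algebraic_formula} via the substitution $v \mapsto \lambda_v := \tau(v) z\,\emptyset$. By Lemma \ref{lemma:injective}, $\tau(v) z \in W^0$, so $w_{\lambda_v} = \tau(v) z$ is well-defined in $\mathcal{R}$. Once the containment $\lambda_v \leq R$ is established (see below), the quotient $w_{R/\lambda_v}$ is defined, and a direct computation gives
\[
w_{R/\lambda_v} \;=\; w_R\,w_{\lambda_v}^{-1} \;=\; z\,(\tau(v)z)^{-1} \;=\; \tau(v)^{-1},
\]
whence $\tau^{-1}(w_{R/\lambda_v}) = v^{-1}$ and $w_{\lambda_v}\,\tau^{-1}(w_{R/\lambda_v}) = \tau(v) z v^{-1}$, matching the $v$-summand of Proposition \ref{algebraic_formula}.

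The main step is to show that $v \mapsto \lambda_v$ is a bijection from $W_0^j$ onto $\{\lambda : S \leq \lambda \leq R\}$. Injectivity is immediate from the bijection $W^0 \leftrightarrow \mathcal{R}$. The key length identity $\ell(\tau(v) z) = \ell(z) - \ell(v)$ arises by applying the standard parabolic length formula for $W_0 \subset W$ to the Grassmannian element $\tau(v) z \in W^0$ and $v^{-1} \in W_0$, which yields $\ell(\tau(v) z v^{-1}) = \ell(\tau(v) z) + \ell(v)$, and noting that $\tau(v) z v^{-1} = z_{v\star\gamma}$ has length $\ell(z)$ by homogeneity of $\mathfrak{s}_z$ in the graded Fomin--Stanley subalgebra. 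This makes the factorization $w_R = \tau(v)^{-1}\cdot w_{\lambda_v}$ reduced, so $w_{\lambda_v} \leq w_R$ in Bruhat order. A symmetric argument applied to the factorization $w_{\lambda_v} = \tau(v(w_0^j)^{-1}) \cdot w_S$ yields $w_S \leq w_{\lambda_v}$, where the corresponding length identity comes from taking $v = w_0^j$ in the previous argument.

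The main obstacle will be surjectivity onto the full Bruhat interval: given $\lambda$ with $S \leq \lambda \leq R$, one must produce $v \in W_0^j$ with $w_\lambda = \tau(v) z$. The natural candidate is $v := \tau^{-1}(w_\lambda z^{-1})$, and the difficulty is to verify that $v$ lies in $W_0^j$ rather than just in $W$. The cleanest route is a cardinality comparison: since the injective map already places $|W_0^j|$ distinct elements inside the interval, it suffices to prove $|[S,R]| = |W_0^j|$. This equality should follow from a structural analysis of parabolic Bruhat intervals among Grassmannian elements, identifying $[S, R]$ with reduced subwords of a reduced expression for $\tau(w_0^j)$ and bijecting those subwords with $W_0^j$ via $\tau^{-1}$.
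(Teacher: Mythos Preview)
Your approach is essentially the paper's: define $\Phi\colon W_0^j\to\mathcal{R}$ by $v\mapsto \tau(v)z\,\emptyset$, invoke Lemma~\ref{lemma:injective} to land in $W^0$, compute $w_{R/\lambda}=\tau(v^{-1})$, and reduce to Proposition~\ref{algebraic_formula}. The algebraic manipulation of the summand is identical to the paper's.

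Where you diverge is in rigor on the image description. The paper simply asserts that the image of $\Phi$ is exactly $\{\lambda: S\le\lambda\le R\}$ and moves on. You instead try to prove the containment via the length identity $\ell(\tau(v)z)=\ell(z)-\ell(v)$ (your derivation of this from homogeneity of $\mathfrak{s}_z$ and the parabolic length formula for $W^0\cdot W_0$ is correct and is a genuine addition), obtaining $\lambda_v\le R$ in left weak order, hence in Bruhat order. Your argument for $S\le\lambda_v$ needs one more line: you must check $\ell\bigl(v(w_0^j)^{-1}\bigr)=\ell(w_0^j)-\ell(v)$, which holds because $v\in W_0^j$ and $w_0^j$ is the longest element there, so that $v\le w_0^j$ in right weak order on $W_0^j$.

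The surjectivity onto the full interval is, as you flag, the real issue, and the paper does not supply a proof either. Your cardinality idea is the right instinct but, as written, is not a proof: identifying $|[S,R]|$ with reduced subwords of $\tau(w_0^j)$ is not valid in Bruhat order (distinct subwords can give the same element, and not every Bruhat-intermediate element need arise as a left prefix). What does work is to stay in \emph{left weak order}: there, $w_S\le_L w_\lambda\le_L w_R$ forces $w_\lambda=y\,w_S$ with $y$ a reduced left factor of $w_R w_S^{-1}=\tau(w_0^j)^{-1}$, and such $y$ are in bijection with $\{v\in W_0^j\}$ via $y=\tau(v(w_0^j)^{-1})$. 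If the order on $\mathcal{R}$ is meant to be Bruhat rather than weak, then both your argument and the paper's assertion have the same unfilled gap.
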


\begin{proof}
We construct a map $\Phi: W_0^j \rightarrow \mathcal{R}$ by sending $v \in W_0^j$ to  $\Phi(v) = \tau(v) z \emptyset$. By Lemma \ref{lemma:injective}, $\Phi$ is injective and hence $\Phi$ is a bijection on its image, which is precisely all $\lambda \in \mathcal{R}$  satisfying $S \leq \lambda \leq R$. In other words, $w_\lambda = \tau(v) z$ whenever $\Phi(v) = \lambda$. 

Now $w_{R/\lambda} = w_R w_\lambda^{-1}$, so $w_{R/\lambda} = z (\tau(v) z)^{-1} = \tau(v^{-1})$. 
Therefore $w_\lambda \tau^{-1} (w_{R/\lambda}) = \tau(v) z \tau^{-1}(\tau(v^{-1})) = \tau(v) z v^{-1}$.
By Proposition \ref{algebraic_formula}, the theorem follows.
\end{proof}

\begin{remark}
It should be noted that Theorem \ref{thm:combinatorial} gives the reduced words which appear in the expansion of $\mathfrak{s}_z$; they are precisely the reduced words which correspond to objects from $\mathcal{R}$. Once the bijection between $W^0$ and $\mathcal{R}$ is understood, the terms in Theorem \ref{thm:combinatorial} are as well.

  In this sense the theorem is stronger than Proposition \ref{mainprop}, although its proof relies entirely on the proposition. In particular, this theorem generalizes Definition 2.1 of \cite{BBTZ} to all affine types.
\end{remark}
 
\section{Type C combinatorics}\label{type c}
 
 As an application of Theorem \ref{thm:combinatorial}, we use this section to develop the combinatorics of affine type $C$. 
 
\subsection{Type C root system background}
Fix an integer $k > 1$. We recall some facts about roots and weights in affine type $C$ (see \cite{carter} for more details). We let $\epsilon_1, \dots, \epsilon_k$ denote an orthonormal basis for $V:=\mathbb{R}^k \equiv \hfin \otimes \mathbb{R}$.
We realize $\alpha_1 = \epsilon_1 - \epsilon_2, \alpha_2 = \epsilon_2 - \epsilon_3, \dots,\alpha_{k-1} = \epsilon_{k-1} - \epsilon_k, \alpha_k = 2 \epsilon_k$ as the simple roots of finite type $C_k$. 

The fundamental weights are realized as $\Lambda_i := \epsilon_1 + \dots + \epsilon_i$ for $i = 1, \dots, k$. The fundamental coweights are $\Lambda_i^\vee = 2 \Lambda_i$ for $i \neq k$ and $\Lambda_k^\vee = \Lambda_k$. 

The fundamental coweights $\Lambda_1^\vee, \dots, \Lambda_{k-1}^\vee$ also belong to the coroot lattice $\Qfin^\vee $. The elements $t_{\Lambda_i^\vee}$ actually equal $z^{-1}_{\Lambda_i^\vee}$ (for $i \neq k$) in $\Wext$, i.e. these elements have trivial Dynkin diagram automorphisms (as compared to type $A$, where all fundamental coweights correspond to distinct non-trivial Dynkin diagram automorphisms). 

Since $\Lambda^\vee_k$ is not in $\Qfin^\vee$, $t_{\Lambda_k^\vee}$ corresponds to a non-trivial Dynkin diagram automorphism. In affine type $C$ there is only one such automorphism, which we will denote $\tau$. It is defined by $\tau(i) = k-i$ for all $i \in \{0,1,\dots,k \}$.

We let $W$ denote the affine Coxeter group of type $C$. Recall it is generated by $s_0, s_1, \ldots, s_k$ subject to the relations:
   \begin{align*}
         s_i^2 = 1  & \textrm{ for } i \in \K,\\
          s_is_j = s_js_i   & \textrm{ if } i-j \neq \pm 1,\\
           s_is_{i+1}s_i = s_{i+1}s_is_{i+1}  & \textrm{ for } i \in \{1, \dots, k-2\},\\
                    s_i s_{i+1} s_i s_{i+1} = s_{i+1} s_i s_{i+1} s_i & \textrm{ for } i \in \{0, k-1\}.
\end{align*}

\subsection{Bijection between Grassmannian elements and symmetric $2k$-cores}
 
\begin{defn}
The \emph{hook length} of a cell $x$ in the Young diagram of a partition $\lambda$ is the number of cells of the Young diagram of $\lambda$ to the right of $x$ and above $x$, including the box $x$. A partition $\lambda$ is called an $n$-core if for every cell $x$ in the Young diagram of $\lambda$, $n$ does not divide the hook length of $x$.
\end{defn}
 
In \cite{HJ}, Hanusa and Jones give a construction for a combinatorial affine Grassmannian set for $W$ for all classical affine $W$ (the affine Grassmannians corresponding to $B_{k}^{(1)}/B_k, C_{k}^{(1)}/C_k, D_{k}^{(1)}/D_k, B_{k}^{(1)}/D_k$).

In affine type $C$, the set $\mathcal{R}$ of combinatorial affine Grassmannian elements they give are symmetric $2k$-core partitions (symmetry is with respect to transposing the partition). We give a short outline of the action of $W$ on $\mathcal{R}$ as follows:
 
Let the residue of a cell $(i,j)$ of a Young diagram be:
 
$res(i,j) = \left\{ \begin{array}{ll} j-i \mod\, 2k & \textrm{ if }0 \leq (j-i)  \mod \,2k \leq k  \\ 2k - ((j-i)  \mod\, 2k) & \textrm{ if } k< (j-i)  \mod\, 2k < 2k
\end{array} \right.$

We can then define an action on symmetric 2k-core partitions by letting $s_i \lambda =$  
\[\left\{ \begin{array}{ll} \lambda \cup \{\textrm{residue } i \textrm{ cells}\} & \textrm{ if } \lambda \textrm{ has addable cell of residue } i \\
\lambda \setminus \{ \textrm{residue } i \textrm{ cells}\} & \textrm{ if } \lambda \textrm{ has  removable cell of residue } i \\ \lambda & \textrm{ else } \end{array} \right.\]
 
\begin{thm}[Hanusa, Jones \cite{HJ}]
 The action of $W$ on $\mathcal{R}$ described above makes $\mathcal{R}$ into a combinatorial affine Grassmannian set for $W$. 
 \end{thm}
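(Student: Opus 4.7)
The plan is to verify the three defining properties of a combinatorial affine Grassmannian set for the pair $(W, \mathcal{R})$. The first and most substantive task is to confirm that the prescribed rule $s_i \colon \lambda \mapsto s_i \lambda$ extends to a group action of $W$, which amounts to checking the Coxeter presentation given above. The relation $s_i^2 = 1$ is immediate: adding all addable $i$-cells produces a partition in which those cells become removable $i$-cells, and conversely. Symmetry of $\lambda$ under transposition is preserved because the residue formula satisfies $\mathrm{res}(i,j) = \mathrm{res}(j,i)$, so addable and removable cells come in symmetric pairs (or, for the exceptional residues, as diagonal cells that are already fixed by transposition). The commutation relations for $|i-j| \neq \pm 1$ are local and hold because cells of far-apart residues do not influence each other's addability status.

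The nontrivial Coxeter relations are the braid relations. The ordinary braid $s_i s_{i+1} s_i = s_{i+1} s_i s_{i+1}$ for $1 \leq i \leq k-2$ follows by a standard $\widetilde{A}$-type argument on the addable/removable corners of residues $i$ and $i+1$. The subtle cases are the length-four braids at the endpoints $i \in \{0, k-1\}$: here I would need to analyze how residue $0$ cells, which sit near the main diagonal of a symmetric $2k$-core, interact with their residue $1$ neighbors, and dually how residue $k$ cells sit along the shifted diagonal where the folded residue formula reflects. This endpoint analysis, driven by the transpose symmetry of $\lambda$, is the main technical obstacle; the folded residue function is essentially designed so that these double braid relations hold, with the ``reflection'' at residue $k$ mirroring the type-$C$ Dynkin behaviour.

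Once the action is established, the remaining two properties follow cleanly. Taking $\emptyset \in \mathcal{R}$ to be the empty partition, its only addable cell is $(1,1)$, of residue $0$, so $s_i \emptyset = \emptyset$ for all $i \neq 0$, giving $W_0 \emptyset = \{\emptyset\}$. Transitivity is then proved by induction on $|\lambda|$: every nonempty symmetric $2k$-core has at least one removable corner of some residue $i$, and $s_i \lambda$ has strictly fewer boxes, so a descending chain of applications reduces any $\lambda$ to $\emptyset$. Finally, the bijection $W^0 \to \mathcal{R}$, $w \mapsto w\emptyset$, is obtained as follows: the stabilizer condition $W_0 \emptyset = \{\emptyset\}$ shows the map factors through $W/W_0$, which is identified with $W^0$; transitivity gives surjectivity; and injectivity follows from the observation that each generator either strictly increases or strictly decreases $|\lambda|$, so a reduced word for a minimal coset representative $w \in W^0$ cannot revisit any partition, forcing distinct $w \in W^0$ to produce distinct $w\emptyset$.
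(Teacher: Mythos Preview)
The paper does not prove this theorem at all: it is stated with attribution to Hanusa and Jones \cite{HJ} and used as a black box, so there is no ``paper's own proof'' to compare your attempt against. Your sketch is therefore not competing with anything in the text; it is an outline of what the cited reference establishes.

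As a standalone argument, your outline is reasonable in shape but has two soft spots worth flagging. First, you correctly identify the endpoint braid relations $s_0 s_1 s_0 s_1 = s_1 s_0 s_1 s_0$ and $s_{k-1} s_k s_{k-1} s_k = s_k s_{k-1} s_k s_{k-1}$ as the crux, but you do not actually carry them out; ``the folded residue function is essentially designed so that these hold'' is an assertion, not a proof, and this is precisely where the work in \cite{HJ} lies. Second, your injectivity argument for $W^0 \to \mathcal{R}$ is not quite right as stated: you claim each generator strictly increases or decreases $|\lambda|$, but the definition explicitly includes a third case where $s_i$ fixes $\lambda$. What one actually needs is that the stabilizer of $\emptyset$ is \emph{exactly} $W_0$ (you only showed $W_0 \subseteq \mathrm{Stab}(\emptyset)$), or equivalently that along any reduced word for $w \in W^0$ the action on $\emptyset$ always adds cells; this is the substantive content and is what Hanusa--Jones prove via their abacus model.
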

 
\begin{eg}
Let $k= 3$ and let $w = s_{1} s_{2}s_{3}s_{2}s_{0}s_{1}s_{0} \in W^0$. Then $w$ corresponds to the symmetric $6$-core $(6,3,2,1,1,1)$.
$$\young{1\cr2\cr3\cr2&1\cr1&0&1\cr0&1&2&3&2&1\cr}$$
\end{eg}

\begin{remark} Symmetric $2k$-core partitions have extraneous data. Half of the partition is determined from the other, so we will sometimes think of a symmetric $2k$-core as a diagram with boxes $(i,j)$ for $j \geq i$. We call such a diagram a \emph{shifted diagram}.
\end{remark}

\begin{eg}
Let $k=3$ and $w = s_{1} s_{2}s_{3}s_{2}s_{0}s_{1}s_{0}$ as above. Then the shifted diagram for the $6$-core is:

$$\young{\blk&0&1\cr0&1&2&3&2&1\cr}$$

\end{eg}

\begin{eg} A portion of the lattice of symmetric $4$-cores coming from the action
of the affine Coxeter group of type $C_2^{(1)}$ is pictured below.

\begin{center}
\includegraphics[width=4in]{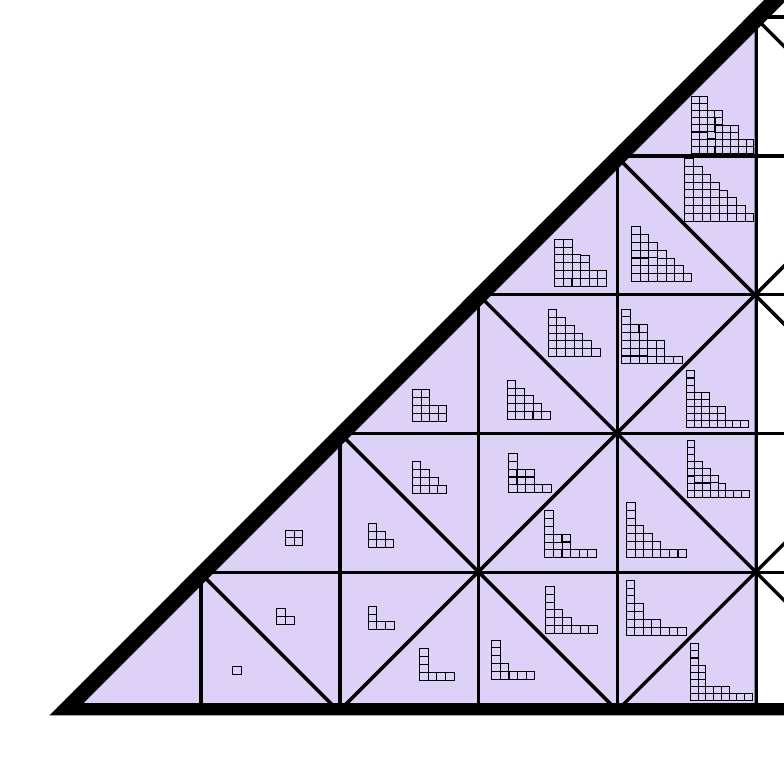}
\end{center}

The pseudo-translation $z_{\Lambda_1^\vee}$ corresponding to
the fundamental coweight $\Lambda_1^\vee= 2\epsilon_1$ takes the fundamental
alcove to the alcove indexed by the symmetric $4$-core $(4,1,1,1)$
and the pseudo-translation $z_{\Lambda_2^\vee}$ corresponding
to the fundamental coweight $\Lambda_2^\vee = \epsilon_1+\epsilon_2$ 
takes the fundamental alcove to the alcove indexed
by the symmetric $4$-core $(2,2)$.
\end{eg}

\subsection{The words and cores corresponding to fundamental coweights}
  
Each $s_i$ acts on $V$ by reflecting across the hyperplane corresponding to the simple root $\alpha_i$ for $i \neq 0$ and reflecting across the affine hyperplane $H_{\theta,1} = \{v\in V : \langle v, \theta \rangle = 1 \}$, where $\theta$ is the highest root, for $i=0$. Specifically, if we let $(a_1, \dots, a_k) \in V$ represent $\sum_i a_i \epsilon_i$, then:\\
 
$s_i  \diamond (a_1, \dots, a_k) =  \left\{  \begin{array}{ll} (a_1, \dots, a_{i+1}, a_i, \dots, a_k) & \textrm{ for } i =1,\dots, k-1;\\
 (a_1, \dots, a_{k-1}, -a_k)& \textrm{ for } i = k;\\
 (2-a_1, \dots, a_k) & \textrm{ for } i = 0. \end{array} \right.$\\

For $i \leq k+1$ we let $w_i := s_{i-1} s_{i-2} \cdots s_1 s_0 \in W$.
\begin{lemma}\label{lemma:w_i}
For $i \leq k$, the element $w_i$ acts on $v = (a_1, \dots, a_k) \in V$ by:
\[ w_i \diamond v = (a_2, a_3, \dots, a_i, 2-a_1, a_{i+1}, \dots, a_k ).\] 

Also, \[w_{k+1} \diamond v = (a_2, a_3, \dots, a_k, a_1 - 2) \]

\end{lemma}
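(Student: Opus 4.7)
The plan is to prove both formulas by induction on $i$, exploiting the factorization $w_{i+1} = s_i w_i$ and the fact that $\diamond$ is a left action, so that $w_{i+1} \diamond v = s_i \diamond (w_i \diamond v)$. The base case $i=1$ is immediate: $w_1 = s_0$ and the explicit formula given just before the lemma yields $s_0 \diamond v = (2-a_1, a_2, \ldots, a_k)$, which is exactly the $i=1$ instance of the claimed formula (with the convention that the initial range $a_2, \ldots, a_i$ and the final range $a_{i+1}, \ldots, a_k$ are read as empty in the degenerate cases $i=1$ and $i=k$ respectively).

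For the inductive step with $i+1 \leq k$, assume the formula holds for $i$, so that
\[ w_i \diamond v = (a_2, a_3, \ldots, a_i, 2-a_1, a_{i+1}, a_{i+2}, \ldots, a_k). \]
Reading off the coordinates, the $i$-th entry of this vector is $2 - a_1$ and the $(i+1)$-st entry is $a_{i+1}$. Since $1 \leq i \leq k-1$ in this regime, the recalled formula for $s_i$ tells me that $s_i$ swaps precisely these two coordinates. Applying it gives
\[ w_{i+1} \diamond v = (a_2, a_3, \ldots, a_i, a_{i+1}, 2-a_1, a_{i+2}, \ldots, a_k), \]
which is the claimed formula for $w_{i+1}$.

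The case $i = k+1$ is handled by one additional step: from the $i=k$ case, I have $w_k \diamond v = (a_2, \ldots, a_k, 2-a_1)$, and since $s_k$ negates the last coordinate, $w_{k+1} \diamond v = s_k \diamond (w_k \diamond v) = (a_2, \ldots, a_k, a_1 - 2)$, as required. There is essentially no obstacle here; the proof is just bookkeeping of coordinate positions under the generators whose $\diamond$-action has already been spelled out. The only point requiring a touch of care is the convention that concatenation of simple reflections in $w_i = s_{i-1} \cdots s_0$ composes on the left, so applying $s_i$ to $w_i \diamond v$ yields $w_{i+1} \diamond v$ rather than $(w_i s_i) \diamond v$.
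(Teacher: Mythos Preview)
Your proof is correct and is essentially what the paper has in mind: the paper's proof reads in its entirety ``Simple calculation using Weyl group action described above,'' and your induction on $i$ via the factorization $w_{i+1} = s_i w_i$ is the natural way to carry out that calculation explicitly.
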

 
\begin{proof}
Simple calculation using Weyl group action described above.
\end{proof}
  
\begin{lemma}\label{lemma:www}
$w_{k+1}^{-1} w_k w_j^{-1}  \diamond (a_1, \dots, a_k) = (a_j-2, a_1, a_2, \dots, \widehat{a_j}, \dots, a_k)$.
\end{lemma}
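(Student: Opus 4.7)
The natural strategy is to apply the three factors to $(a_1, \ldots, a_k)$ in turn, reading from right to left and using Lemma~\ref{lemma:w_i} (and its inverses) at each stage. So the plan is to:

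\begin{enumerate}
\item Derive a closed formula for $w_j^{-1} \diamond v$ by inverting the formula for $w_j \diamond v$ given in Lemma~\ref{lemma:w_i}. Since $w_j$ takes $(a_1, \ldots, a_k)$ to $(a_2, \ldots, a_j, 2-a_1, a_{j+1}, \ldots, a_k)$, its inverse pulls the entry at position $j$ back to position $1$ (undoing the $a_1 \mapsto 2 - a_1$ twist) and shifts the entries $a_1, \ldots, a_{j-1}$ one step to the right, yielding
$$w_j^{-1} \diamond (a_1, \ldots, a_k) = (2 - a_j,\, a_1, \ldots, a_{j-1},\, a_{j+1}, \ldots, a_k).$$
\item Apply Lemma~\ref{lemma:w_i} with $i = k$ to compute $w_k$ acting on the tuple from Step~1. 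The left-cyclic-shift formula $w_k \diamond (c_1, \ldots, c_k) = (c_2, \ldots, c_k, 2 - c_1)$ is especially clean here: since $c_1 = 2 - a_j$, the new final entry $2 - c_1$ becomes exactly $a_j$, and the remaining entries become $a_1, \ldots, \widehat{a_j}, \ldots, a_k$.
\item Derive a closed formula for $w_{k+1}^{-1} \diamond v$ by inverting $w_{k+1} \diamond (a_1, \ldots, a_k) = (a_2, \ldots, a_k, a_1 - 2)$; this inverse performs a right cyclic shift and adjusts the new first coordinate by a constant. Apply this formula to the result of Step~2 to read off the right-hand side of the claimed identity.
\end{enumerate}

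The entire argument is a sequence of direct substitutions once the two inverse formulas are in hand, so there is essentially no conceptual obstacle; the only thing to watch carefully is the bookkeeping of the additive constants that arise from the $s_0$-twist ($a_1 \mapsto 2 - a_1$) and from the presence of $s_k$ in $w_{k+1}$ but not in $w_k$. An alternative — and perhaps more transparent — route would be to expand $w_{k+1}^{-1} w_k w_j^{-1}$ as a word in the simple reflections, cancel the obvious adjacent pair $s_0 s_0$ arising between $w_1^{-1}$ and $w_2$ (more generally, commute and cancel letters where possible), and then apply the simple-reflection action formulas directly; this avoids inverting compound expressions at the cost of a longer word-level computation.
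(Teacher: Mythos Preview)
Your approach is exactly what the paper intends: its entire proof is ``Simple calculation using Lemma~\ref{lemma:w_i},'' and you have spelled that calculation out step by step.

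One caution, however: if you actually carry out Step~3, you will \emph{not} obtain the stated right-hand side. With $w_{k+1}\diamond(a_1,\ldots,a_k)=(a_2,\ldots,a_k,a_1-2)$ one finds $w_{k+1}^{-1}\diamond(d_1,\ldots,d_k)=(d_k+2,d_1,\ldots,d_{k-1})$, and applying this to the output $(a_1,\ldots,\widehat{a_j},\ldots,a_k,a_j)$ of Step~2 yields
\[
(a_j+2,\,a_1,\ldots,\widehat{a_j},\ldots,a_k),
\]
not $(a_j-2,\ldots)$. A direct check (e.g.\ $k=2$, $j=1$, where $w_3^{-1}w_2w_1^{-1}=s_0s_1s_2s_1$ sends $(a_1,a_2)$ to $(a_1+2,a_2)$) confirms this. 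So the lemma as printed has a sign typo; the same typo reappears in the next proof (where $G_\emptyset-(2,\ldots,2,0,\ldots,0)$ should be $G_\emptyset+(2,\ldots,2,0,\ldots,0)$, consistent with $\mathcal{A}_{z_\gamma}=\mathcal{A}_\emptyset+\gamma$), but the conclusion $w^j=z_\gamma$ is unaffected. Your method is correct; just be aware that it proves the corrected statement with $a_j+2$.
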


\begin{proof}
Simple calculation using Lemma \ref{lemma:w_i}.
 \end{proof}
 
If $G_\emptyset$ is the centroid of $\mathcal{A}_\emptyset$, then \[G_\emptyset = \frac{1}{k+1} \sum_i \Lambda_i = (\frac{k}{k+1}, \frac{k-1}{k+1}, \dots, \frac{1}{k+1}).\]

Recall that for a fixed $j$ we let $\gamma$ denote the coweight $\Lambda_j^\vee$.

\begin{lemma}
For $j\neq k$, $z_\gamma =  (w_j w_k^{-1} w_{k+1})^j$.
\end{lemma}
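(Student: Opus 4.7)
My approach is to invoke Lemma \ref{lemma:translates}, which characterizes $z_\gamma$ uniquely by the condition $\mathcal{A}_{z_\gamma} = \mathcal{A}_\emptyset + \gamma$. Setting $z' := (w_j w_k^{-1} w_{k+1})^j$, it therefore suffices to prove that $\mathcal{A}_{z'} = \mathcal{A}_\emptyset + \gamma$. Using the identification $\mathcal{A}_w = w^{-1} \diamond \mathcal{A}_\emptyset$, this reduces to showing that $(z')^{-1} = (w_{k+1}^{-1} w_k w_j^{-1})^j$ acts on $V$ as pure translation by $\gamma = \Lambda_j^\vee = 2(\epsilon_1 + \cdots + \epsilon_j)$.

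Set $\phi := w_{k+1}^{-1} w_k w_j^{-1}$, so that $(z')^{-1} = \phi^j$. By Lemma \ref{lemma:www}, a single application of $\phi$ on $(a_1, \ldots, a_k)$ is a cyclic rotation of the first $j$ coordinates combined with a shift of $\pm 2$ on the entry moved into position $1$, while positions $j+1, \ldots, k$ are untouched. The core of the proof is to iterate this formula $j$ times and show that the cumulative effect is the pure translation.

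I would proceed by induction on $\ell \in \{1, 2, \ldots, j\}$, showing that after $\ell$ applications of $\phi$, the first $\ell$ coordinates are the shifted values of $a_{j-\ell+1}, a_{j-\ell+2}, \ldots, a_j$ (each with the same $\pm 2$ shift), the next $j-\ell$ coordinates are $a_1, a_2, \ldots, a_{j-\ell}$ (unshifted), and the last $k-j$ coordinates are $a_{j+1}, \ldots, a_k$ (unchanged). The inductive step reads off what Lemma \ref{lemma:www} does to this specific form, noting that the entry at position $j$ going into the next step is $a_{j-\ell}$, exactly the next originally-placed value. At $\ell = j$ this gives
$$\phi^j \diamond (a_1, \ldots, a_k) = (a_1, \ldots, a_j, a_{j+1}, \ldots, a_k) + 2(\epsilon_1 + \cdots + \epsilon_j),$$
which is translation by $\gamma$. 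Combined with Lemma \ref{lemma:translates}, this yields $z' = z_\gamma$. This is internally consistent with the fact that, since $j \neq k$, the coweight $\gamma$ lies in $\Qfin^\vee$, so $t_\gamma \in \Waf$ has trivial Dynkin diagram automorphism $\tau_\gamma$ and $z_\gamma = t_{-\gamma}$.

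The main obstacle is the bookkeeping in the inductive step: at each stage, I need to verify that the coordinate being shifted by $\phi$ is exactly the ``fresh'' value $a_{j-\ell}$ rather than a previously shifted one, which is what makes the shifts accumulate coordinate-by-coordinate into $2(\epsilon_1 + \cdots + \epsilon_j)$ rather than overlap. Once the inductive hypothesis is set up with the correct description of which coordinates are shifted and in which cyclic order the unshifted ones sit, the induction is mechanical.
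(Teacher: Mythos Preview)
Your proposal is correct and follows essentially the same approach as the paper: both iterate Lemma~\ref{lemma:www} $j$ times to compute the $\diamond$-action of $(w_{k+1}^{-1} w_k w_j^{-1})^j$ and then invoke Lemma~\ref{lemma:translates}. The only cosmetic difference is that the paper applies this computation to the centroid $G_\emptyset$ alone, whereas you verify it on a generic point $(a_1,\ldots,a_k)$; since an alcove is determined by its centroid, either suffices.
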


\begin{proof}

Let $w = w_j w_k^{-1} w_{k+1}$.
We compute the centroid of the alcove $G_{w^j} = w^{-j} \diamond G_\emptyset = G_\emptyset - (\underbrace{2, 2, \dots, 2}_j, 0, 0, \dots, 0)$ by Lemma \ref{lemma:www}. Therefore $w^j = z_\gamma$ by Lemma \ref{lemma:translates}.
\end{proof}

\begin{cor}
For $j \neq k$, $z_\gamma$ corresponds to the symmetric $2k$-core $\lambda = ((2k)^j, j^{2k-j})$. Equivalently, $z_\gamma$ corresponds to the shifted partition $(2k, 2k-1, \dots, 2k-j+1)$.
\end{cor}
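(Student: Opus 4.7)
My plan is to apply the preceding lemma, which identifies $z_\gamma$ with $(w_j w_k^{-1} w_{k+1})^j$, and then compute the action on the empty $2k$-core directly via the Hanusa--Jones bijection.

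First I would simplify the word. Expanding
$w_j w_k^{-1} = (s_{j-1}\cdots s_1 s_0)(s_0 s_1 \cdots s_{k-1})$, the central $s_0 s_0$ cancels; the resulting adjacent $s_1 s_1$ also cancels, and continuing this cascade yields $w_j w_k^{-1} = s_j s_{j+1}\cdots s_{k-1}$. Composing with $w_{k+1} = s_k s_{k-1}\cdots s_1 s_0$ gives
$$r := w_j w_k^{-1} w_{k+1} = s_j s_{j+1}\cdots s_{k-1} s_k s_{k-1} \cdots s_1 s_0,$$
of length $2k - j + 1$, and $z_\gamma = r^j$.

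I would then claim that for every $0 \leq i \leq j$,
$$r^i \cdot \emptyset \;=\; \mu^{(i)} \;:=\; ((2k - j + i)^i, \, i^{2k - j}),$$
the case $i = j$ being precisely the statement of the corollary. A short verification shows each $\mu^{(i)}$ is a symmetric $2k$-core: symmetry is immediate from its ``L-shape'', and a direct computation of hook lengths gives values in $[1, 2k - 1]$ modulo $2k$ whenever $i \leq j < k$. The claim is proved by induction on $i$ (base case $i=0$ being trivial). For the inductive step I would apply $r$ to $\mu^{(i-1)}$ letter by letter, reading right to left. The rightmost $s_0$ adds the unique addable diagonal cell $(i,i)$. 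Each subsequent $s_\ell$ adds the symmetric-addable residue-$\ell$ cells: these extensions extend row $i$ (and symmetrically column $i$) one cell at a time from length $i$ up to length $2k - j + i$, and at the particular residues that coincide with the residues of the cells $(a, 2k - j + i)$ for $a = 1, \ldots, i-1$, they simultaneously extend rows $1$ through $i-1$ (and mirrors) by one column each, taking their lengths from $2k - j + i - 1$ to $2k - j + i$.

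The main obstacle is the case analysis needed to verify exactly which cells are symmetric-addable at each application of $s_\ell$. Using $\mathrm{res}(a,b) = (b - a) \bmod 2k$ (folded to $2k - \cdot$ if the result exceeds $k$), one must check that the addable residue-$\ell$ cells of $\mu^{(i-1)}$ with the previously added cells are exactly the pairs required for $\mu^{(i)}$, and that each such pair is truly symmetric (both the cell and its transpose are addable). The delicate point is the ``collision'' residues, where a single $s_\ell$ simultaneously extends row $i$ and one of rows $1, \ldots, i-1$; the positions of these collisions depend on $i, j, k$, but the rectangle-plus-staircase structure of $\mu^{(i-1)}$ makes the verification systematic. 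Finally, the shifted-partition description $(2k, 2k-1, \ldots, 2k - j + 1)$ is read off directly from $\lambda = ((2k)^j, j^{2k-j})$ by recording the cells on and above the main diagonal.
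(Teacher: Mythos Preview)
Your approach is correct and is essentially the same as the paper's: both arguments use the preceding lemma to write $z_\gamma = r^j$ with $r = w_j w_k^{-1} w_{k+1}$, and then track the action of successive applications of $r$ on the empty core, observing that each application adds a new shifted row of length $2k-j+1$ and extends each previous shifted row by one box. Your version is in fact more detailed than the paper's two-sentence sketch, since you first simplify $r$ to the explicit reduced word $s_j s_{j+1}\cdots s_k \cdots s_1 s_0$ and name the intermediate partitions $\mu^{(i)} = ((2k-j+i)^i, i^{2k-j})$.
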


\begin{proof}
Let $w = w_j w_k^{-1}w_{k+1}$. The first application of $w$ will add $2k-j+1$ boxes to the shifted diagram. Every subsequent application adds $2k-j+1$ boxes to a new row of the shifted diagram and one box to each previous row.
\end{proof}

The last case, when $\gamma = \Lambda_k^\vee$, is slightly different. We end this section by describing the corresponding symmetric $2k$-core in this case.

\begin{lemma}\label{lemma:z}
 If $\gamma  = \Lambda_k^\vee$ then $z_{\gamma} = w_k^{-1} w_{k-1}^{-1} \cdots w_1^{-1}$.
\end{lemma}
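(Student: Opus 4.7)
The plan is to apply Lemma~\ref{lemma:translates}: it suffices to verify that $w := w_k^{-1} w_{k-1}^{-1} \cdots w_1^{-1}$ satisfies $\mathcal{A}_w = \mathcal{A}_\emptyset + \gamma$, where $\gamma = \Lambda_k^\vee = (1,1,\ldots,1)$. Since an alcove is determined by its centroid (both $\diamond$-actions on $V$ are affine and hence commute with taking the average of vertices), this reduces to showing
\[ w^{-1} \diamond G_\emptyset = G_\emptyset + \gamma. \]
The fact that $\gamma \notin \Qfin^\vee$ poses no obstacle: the nontrivial $\tau \in \Omega$ appearing in $t_\gamma = z_\gamma^{-1}\tau$ has length $0$ and so fixes $\mathcal{A}_\emptyset$, whence $z_\gamma^{-1}\diamond \mathcal{A}_\emptyset = t_\gamma\diamond\mathcal{A}_\emptyset = \mathcal{A}_\emptyset+\gamma$ exactly as in Lemma~\ref{lemma:translates}.

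Since $w^{-1} = w_1 w_2 \cdots w_k$, the argument reduces to an explicit computation using Lemma~\ref{lemma:w_i}. I would establish, by downward induction on $i$ starting at $i=k$, the formula
\[ w_i w_{i+1} \cdots w_k \diamond G_\emptyset \;=\; \tfrac{1}{k+1}\bigl(i-1,\, i-2,\, \ldots,\, 1,\; 2k-i+2,\, 2k-i+1,\, \ldots,\, k+2\bigr). \]
The base case $i=k$ is immediate from Lemma~\ref{lemma:w_i}: it cyclically shifts the entries of $G_\emptyset = \tfrac{1}{k+1}(k,k-1,\ldots,1)$ and replaces the displaced first coordinate $\tfrac{k}{k+1}$ by $2-\tfrac{k}{k+1}=\tfrac{k+2}{k+1}$ in the last slot. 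The inductive step is equally mechanical: applying $w_i$ to the vector at level $i+1$ peels off one more entry from the front, converts the displaced $\tfrac{i}{k+1}$ into $2 - \tfrac{i}{k+1} = \tfrac{2k-i+2}{k+1}$ at position $i$, and leaves the tail $(\tfrac{2k-i+1}{k+1},\ldots,\tfrac{k+2}{k+1})$ untouched, exactly matching the formula at level $i$.

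Setting $i=1$ in the formula then yields
\[ w_1 w_2 \cdots w_k \diamond G_\emptyset = \tfrac{1}{k+1}(2k+1,\, 2k,\, \ldots,\, k+2) = G_\emptyset + (1,1,\ldots,1) = G_\emptyset + \gamma, \]
which is exactly the identity needed to conclude $w = z_\gamma$ via Lemma~\ref{lemma:translates}. There is no conceptual obstacle; the only thing to be careful with is bookkeeping in the induction, making sure that the tail and head segments produced by successive applications of $w_i$ fit together as claimed. (If desired, one could alternatively prove the lemma by tracking the corresponding symmetric $2k$-core through the Hanusa--Jones action and checking that $w_k^{-1} w_{k-1}^{-1} \cdots w_1^{-1}$ builds the staircase $(2k-1, 2k-3, \ldots, 1)$-shaped shifted diagram whose symmetrization is the centroid-shifted alcove, but the centroid approach is the shorter route given the machinery already set up.)
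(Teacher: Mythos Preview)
Your proposal is correct and follows essentially the same approach as the paper: compute the centroid $w^{-1}\diamond G_\emptyset = w_1\cdots w_k\diamond G_\emptyset$ using Lemma~\ref{lemma:w_i}, check that it equals $G_\emptyset+\gamma$, and invoke Lemma~\ref{lemma:translates}. The paper simply asserts the final value $(2-\tfrac{1}{k+1},\ldots,2-\tfrac{k}{k+1})$ without writing out the induction, whereas you spell it out; your side remark that the nontrivial $\tau$ fixes $\mathcal{A}_\emptyset$ (so Lemma~\ref{lemma:translates} still applies even though $\gamma\notin\Qfin^\vee$) is a point the paper leaves implicit.
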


\begin{proof}
\[G_{w_k^{-1} w_{k-1}^{-1} \cdots w_1^{-1}} = (w_k^{-1} w_{k-1}^{-1} \cdots w_1^{-1})^{-1} \diamond G_\emptyset = w_1 \cdots w_{k-1} 
w_k \diamond G_\emptyset = \]  \[  (2-\frac{1}{k+1}, 2- \frac{2}{k+1}, \dots, 2-\frac{k}{k+1}) = (1,1,\dots, 1) + G_\emptyset = \gamma + G_\emptyset .\] By Lemma \ref{lemma:translates}, the statement follows.
\end{proof}
   
 \begin{lemma}\label{lemma:rectangle}
With the action on partitions described above, 
$$w_i^{-1} w_{i-1}^{-1} \cdots w_2^{-1} w_1^{-1} \emptyset = \underbrace{(i, i, \dots, i)}_{i}.$$ 
 \end{lemma}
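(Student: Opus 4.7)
I will prove Lemma~\ref{lemma:rectangle} by induction on $i$. Write $\mu_i := (i,i,\dots,i)$ ($i$ parts) for the $i\times i$ square partition and recall that $w_i^{-1} = s_0 s_1 \cdots s_{i-1}$, so when we apply $w_i^{-1}$ to a partition, the reflection $s_{i-1}$ acts first. The base case is immediate: $s_0 \emptyset$ adds the unique cell of residue $0$, namely $(1,1)$, yielding $\mu_1 = (1)$. For the inductive step, assume $w_{i-1}^{-1}\cdots w_1^{-1}\emptyset = \mu_{i-1}$; the goal is to show that applying $s_{i-1}, s_{i-2}, \dots, s_1, s_0$ in order to $\mu_{i-1}$ produces $\mu_i$.

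\textbf{Residue computation on the L-shape.} The key observation is that the skew shape $\mu_i \setminus \mu_{i-1}$ is an ``L'' consisting of the last column $\{(r,i) : 1 \le r \le i\}$ and the last row $\{(i,c) : 1 \le c \le i-1\}$. Using the residue formula (and the hypothesis $i \le k$), one computes that the cell $(r,i)$ has residue $i-r$ and the cell $(i,c)$ has residue $i-c$; in particular the diagonal cell $(i,i)$ has residue $0$, and for each $j \in \{1,\dots,i-1\}$ exactly two cells of the L-shape (namely $(i-j,i)$ and $(i,i-j)$) have residue $j$, while $(i,i)$ is the unique cell of residue $0$.

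\textbf{Step-by-step verification.} For $0 \le m \le i-1$, let $\nu_m$ be the partition obtained after applying $s_{i-1} s_{i-2} \cdots s_{i-m}$ to $\mu_{i-1}$ (so $\nu_0 = \mu_{i-1}$). I claim, by an inner induction on $m$, that $\nu_m$ is obtained from $\mu_{i-1}$ by adjoining the pairs $\{(i-j,i),(i,i-j)\}$ for $j = i-1, i-2, \ldots, i-m$, which is the shape with first $m$ rows of length $i$, followed by $(i-1-m)$ rows of length $i-1$, followed by rows of lengths $m, m-1, \dots, 1$. Given $\nu_m$, one checks that the only addable cells of residue $i-m-1$ are the symmetric pair $\{(m+1,i),(i,m+1)\}$: the candidate cells $(j,i)$ and $(i,j)$ already in $\nu_m$ are ruled out by the inductive description of $\nu_m$, while other addable cells along the new outer corners (such as $(1,i+1)$ or $(m+2,m+1)$) have residues that can be read off from the formula and shown not to equal $i-m-1$. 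Hence $\nu_{m+1}$ is as described, completing the inner induction. Taking $m = i-1$ gives $\nu_{i-1} = (i,i,\dots,i,i-1) = \mu_i \setminus \{(i,i)\}$, after which $s_0$ adds the unique addable residue-$0$ cell $(i,i)$, producing $\mu_i$.

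\textbf{Main obstacle.} The only delicate point is the uniqueness claim at each stage: we must verify that at step $m$ there is no stray addable cell of residue $i-m-1$ outside the L-shape. This requires checking the residues of cells at the ``fresh'' outer corners created by the previous steps, in particular the corner positions on the boundary of the current shape that are not part of $\mu_i \setminus \mu_{i-1}$. The bound $i \le k$ is essential here, as it guarantees that all residues $i-j$ appearing in the L-shape lie strictly between $0$ and $k$, so the pairs of symmetric cells inside the L-shape never collide with other addable cells of the same residue. Once this bookkeeping is done, the result follows by assembling the inner induction with the outer induction on $i$.
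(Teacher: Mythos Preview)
Your approach is essentially the paper's: induction on $i$, then within the inductive step applying $s_{i-1}, s_{i-2},\dots,s_1,s_0$ one at a time to the square $(i-1)^{i-1}$ and tracking the intermediate shapes. The paper records exactly the same sequence of partitions $(i,i-1,\dots,i-1,1)$, $(i,i,i-1,\dots,i-1,2)$, \dots, $(i,\dots,i)$, though it omits the uniqueness check that you spell out.

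There is, however, a bookkeeping slip in your description of $\nu_m$. Adjoining the pairs $\{(i-j,i),(i,i-j)\}$ for $j=i-1,\dots,i-m$ to $\mu_{i-1}$ produces
\[
\nu_m \;=\; \bigl(\,\underbrace{i,\dots,i}_{m},\ \underbrace{i-1,\dots,i-1}_{i-1-m},\ m\,\bigr),
\]
a partition with $i$ parts whose last row has length $m$, not a staircase tail $m,m-1,\dots,1$ as you wrote. (Only row $i$ is created, and its length grows by one at each step.) Your stated conclusion $\nu_{i-1}=(i,\dots,i,i-1)$ is correct and consistent with this corrected formula, but not with the one you gave. Once you fix this one line, the argument is fine and matches the paper.
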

 
 \begin{proof}
The proof is by induction. $w_1 = s_0$, and $s_0 \emptyset = (1)$. If $w_{i-1}^{-1}\cdots w_1^{-1} \emptyset = (i-1, i-1, \dots, i-1)$, then $w_i^{-1} (i-1, i-1, \cdots, i-1) = s_0 s_1 \cdots s_{i-1} s_i (i-1, \dots, i-1) = s_0 s_1 \cdots s_{i-1} (i, i-1, \dots, i-1, 1) = s_0 s_1 \cdots s_{i-2} (i, i, i-1, \dots, i-1, 2) = \dots = (i,i, \dots, i)$. 
 \end{proof}

\begin{cor}
$z_{\Lambda_k^\vee}$ corresponds to the symmetric $2k$-core \[\underbrace{(k, k, \dots, k)}_{k}.\] Equivalently, this corresponds to the shifted partition $(k, k-1, \dots, 2, 1)$. 
\end{cor}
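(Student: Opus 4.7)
The corollary is essentially an immediate consequence of the two lemmas preceding it (Lemma \ref{lemma:z} and Lemma \ref{lemma:rectangle}); the proof plan is just to combine them.

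First, by Lemma \ref{lemma:z}, we have $z_{\Lambda_k^\vee} = w_k^{-1} w_{k-1}^{-1} \cdots w_1^{-1}$, so identifying the symmetric $2k$-core corresponding to $z_{\Lambda_k^\vee}$ is the same as computing the action of $w_k^{-1} w_{k-1}^{-1} \cdots w_1^{-1}$ on the empty partition $\emptyset$ under the action of $W$ on $\mathcal{R}$ (described by adding/removing cells by residue). But this is exactly the object computed in Lemma \ref{lemma:rectangle}. Setting $i = k$ in that lemma immediately gives
\[
z_{\Lambda_k^\vee}\,\emptyset \;=\; w_k^{-1} w_{k-1}^{-1}\cdots w_1^{-1}\,\emptyset \;=\; \underbrace{(k,k,\dots,k)}_{k},
\]
which is the claimed symmetric $2k$-core.

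For the equivalent statement in terms of the shifted diagram, recall from the remark that a symmetric $2k$-core is recorded by the cells $(i,j)$ with $j \geq i$. For the partition $\lambda = (k,k,\dots,k)$ with $k$ parts, row $i$ has cells in columns $1,\dots,k$, and the cells with $j \geq i$ in row $i$ are exactly those in columns $i,\dots,k$, contributing $k-i+1$ cells. Thus the shifted diagram has row lengths $k, k-1, \dots, 2, 1$, which is precisely the shifted partition $(k,k-1,\dots,2,1)$.

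Since the main work (the action computation) is already carried out in Lemma \ref{lemma:rectangle}, there is no obstacle of substance here — the only thing worth verifying carefully is the translation between the two descriptions of the core (full symmetric partition versus shifted diagram), which is purely bookkeeping.
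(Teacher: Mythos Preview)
Your proof is correct and follows exactly the paper's approach: the paper's proof is simply ``Follows from Lemma \ref{lemma:z} and Lemma \ref{lemma:rectangle},'' and you have spelled out precisely that combination, together with the routine passage from the symmetric $2k$-core to its shifted diagram.
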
 

\begin{proof}
Follows from Lemma \ref{lemma:z} and Lemma \ref{lemma:rectangle}.
\end{proof}

\subsection{Subcores and a combinatorial formula}

We now illustrate our formulas for $k = 3$. We introduce the shorthand notation $\uu(i_1 i_2 \dots i_m)$ to denote $\uu(s_{i_1} s_{i_2} \cdots s_{i_m})$. The simplest example is $j=1$.

\begin{eg}
Let $j=1$. Then $z = z_{\Lambda_1^\vee} = s_1s_2s_3s_2s_1s_0$. The Dynkin automorphism $\tau$ corresponding to $z$ is trivial. 
 $w_0^1$ is the element  $s_1s_2s_3s_2s_1$.  Therefore $R = z \emptyset = (6, 1,1,1,1,1)$ and $S= w_0^1z\emptyset= (1)$. There are $6$ symmetric $6$-cores between $S$ and $R$, they are: 
\[ (1), (2,1), (3,1,1), (4,1,1,1), (5,1,1,1,1), (6,1,1,1,1,1) .\]

They correspond respectively to the following shifted diagrams.

\[\young{0&\bf\color{red}1&\bf\color{red}2&\bf\color{red}3&\bf\color{red}2&\bf\color{red}1\cr} \hspace{.1in}
\young{0&1&\bf\color{red}2&\bf\color{red}3&\bf\color{red}2&\bf\color{red}1\cr} \hspace{.1in}
\young{0&1&2&\bf\color{red}3&\bf\color{red}2&\bf\color{red}1\cr}
\]
\[\young{0&1&2&3&\bf\color{red}2&\bf\color{red}1\cr} \hspace{.1in}
\young{0&1&2&3&2&\bf\color{red}1\cr} \hspace{.1in}
\young{0&1&2&3&2&1\cr}
\]

Therefore \[\mathfrak{s}^C_{z_{\Lambda_1^\vee}} = \uu({0\bf\color{red}12321}) + \uu({10\bf\color{red}1232}) + \uu({210\bf\color{red}123}) \] \[+ \uu({3210\bf\color{red}12})+ \uu({23210\bf\color{red}1})+ \uu({123210}).\]

\end{eg}

\begin{eg}
Let $j=2$. Then $z = z_{\Lambda_2^\vee} = s_2s_3s_2s_1s_0s_2s_3s_2s_1s_0$. The Dynkin automorphism $\tau$ corresponding to $z$ is trivial. $w_0^2 = s_2s_1s_3s_2s_1s_3s_2$.  Therefore $R = z \emptyset = (6, 6,2,2,2,2)$ and $S= w_0^2z\emptyset= (2,2)$. There are $12$ symmetric $6$-cores between $S$ and $R$, they are: 
\[ (2,2), (3,2,1), (4,2,1,1), (3,3,2),\] \[(4,3,2,1), (5,2,1,1,1), (5,4,2,2,1),  (6,3,2,1,1,1),\] \[(6,4,2,2,1,1), (5,5,2,2,2), (6,5,2,2,2,1), (6,6,2,2,2,2) .\]

They correspond respectively to the following shifted diagrams.

\[\young{\blk&0&\bf\color{red}1&\bf\color{red}2&\bf\color{red}3&\bf\color{red}2\cr0&1&\bf\color{red}2&\bf\color{red}3&\bf\color{red}2&\bf\color{red}1\cr} \hspace{.1in}
\young{\blk&0&\bf\color{red}1&\bf\color{red}2&\bf\color{red}3&\bf\color{red}2\cr0&1&2&\bf\color{red}3&\bf\color{red}2&\bf\color{red}1\cr} \hspace{.1in}
\young{\blk&0&\bf\color{red}1&\bf\color{red}2&\bf\color{red}3&\bf\color{red}2\cr0&1&2&3&\bf\color{red}2&\bf\color{red}1\cr} \hspace{.1in}
\young{\blk&0&1&\bf\color{red}2&\bf\color{red}3&\bf\color{red}2\cr0&1&2&\bf\color{red}3&\bf\color{red}2&\bf\color{red}1\cr} \hspace{.1in}
\]
\[\young{\blk&0&1&\bf\color{red}2&\bf\color{red}3&\bf\color{red}2\cr0&1&2&3&\bf\color{red}2&\bf\color{red}1\cr} \hspace{.1in}
\young{\blk&0&\bf\color{red}1&\bf\color{red}2&\bf\color{red}3&\bf\color{red}2\cr0&1&2&3&2&\bf\color{red}1\cr} \hspace{.1in}
\young{\blk&0&1&2&\bf\color{red}3&\bf\color{red}2\cr0&1&2&3&2&\bf\color{red}1\cr} \hspace{.1in}
\young{\blk&0&1&\bf\color{red}2&\bf\color{red}3&\bf\color{red}2\cr0&1&2&3&2&1\cr} \hspace{.1in}
\]
\[\young{\blk&0&1&2&\bf\color{red}3&\bf\color{red}2\cr0&1&2&3&2&1\cr} \hspace{.1in}
\young{\blk&0&1&2&3&\bf\color{red}2\cr0&1&2&3&2&\bf\color{red}1\cr} \hspace{.1in}
\young{\blk&0&1&2&3&\bf\color{red}2\cr0&1&2&3&2&1\cr} \hspace{.1in}
\young{\blk&0&1&2&3&2\cr0&1&2&3&2&1\cr} \hspace{.1in}
\]

By Theorem \ref{thm:combinatorial}, \[\mathfrak{s}_{z_{\Lambda_2}^\vee}^C =  \uu({010\bf\color{red}2132132}) + \uu({0210\bf\color{red}232123}) + \uu({03210\bf\color{red}23212}) + \uu({10210\bf\color{red}23123})\]
 \[+ \uu({103210\bf\color{red}2312}) + \uu({023210\bf\color{red}2321})+ \uu({2103210\bf\color{red}231}) +\uu({1023210\bf\color{red}232})\]
  \[ +\uu({21023210\bf\color{red}23})+\uu({32103210\bf\color{red}21}) +\uu({321023210\bf\color{red}2}) +\uu({2321023210}).
\]

\end{eg}

\begin{eg}
Let $j= 3$. The word $z = z_{\Lambda_3^\vee}$ is $s_0s_1s_2s_0s_1s_0$. Then $z$ corresponds to the unique non-trivial Dynkin automorphism defined by $\tau(i) = 3-i$. The corresponding shifted diagram is $(3,2,1)$. Let $R = (3,3,3) = z\emptyset$ and $S = \tau(w_0^3) z\emptyset = \emptyset$ . There are 8 symmetric $6$ cores between $S$ and $R$. They are \[\emptyset, (1), (2,1), (2,2), (3,1,1), (3,2,1), (3,3,2), (3,3,3).\] These correspond respectively to the following shifted diagrams, where the bold letters correspond to elements not in $\lambda$ which have $\tau^{-1}$ applied to them.

\[\young{\blk&\blk&\bf\color{red}{3}\cr\blk&\bf\color{red}{3}&\bf\color{red}{2}\cr\bf\color{red}{3}&\bf\color{red}{2}&\bf\color{red}{1}\cr} \hspace{.1in}
\young{\blk&\blk&\bf\color{red}3\cr\blk&\bf\color{red}3&\bf\color{red}2\cr 0&\bf\color{red}2&\bf\color{red}1\cr} \hspace{.1in} 
\young{\blk&\blk&\bf\color{red}3\cr\blk&\bf\color{red}3&\bf\color{red}2\cr 0&1&\bf\color{red}1\cr} \hspace{.1in} 
\young{\blk&\blk&\bf\color{red}3\cr\blk&0&\bf\color{red}2\cr 0&1&\bf\color{red}1\cr} \hspace{.1in} \] \[
\young{\blk&\blk&\bf\color{red}3\cr\blk&\bf\color{red}3&\bf\color{red}2\cr 0&1&2\cr} \hspace{.1in} 
\young{\blk&\blk&\bf\color{red}3\cr\blk&0&\bf\color{red}2\cr 0&1&2\cr} \hspace{.1in} 
\young{\blk&\blk&\bf\color{red}3\cr\blk&0&1\cr 0&1&2\cr} \hspace{.1in} 
\young{\blk&\blk&0\cr\blk&0&1\cr 0&1&2\cr} \hspace{.1in} 
\]

By Theorem \ref{thm:combinatorial}, \[\mathfrak{s}_{z_{\Lambda_3}^\vee}^C =  \uu({\bf\color{red}321323})+\uu({0\bf\color{red}32312})+\uu({10\bf\color{red}3231})+\uu({010\bf\color{red}321})\] \[+\uu({210\bf\color{red}323})+\uu({0210\bf\color{red}32})+\uu({10210\bf\color{red}3})+\uu({010210}).\]
\end{eg}

\section{Remaining types}
\label{remaining types}

Although Hanusa and Jones did give descriptions of combinatorial affine Grassmannian sets for the type $B$ and $D$ cases, the combinatorics involved are not as nice. It seems plausible that some different collection of elements better suited to describing the terms appearing in expansions of $k$-Schur functions in these types will arise in the future. Rather than spending a good deal of space here to developing these in full generality, we will include the case of affine $B$ of rank $3$ and affine $D$ of rank $4$ as examples of what the combinatorics would look like; the compelled reader should easily be able to develop a corresponding expansion in full generality from these examples, the concepts of Section \ref{type c}, and a full understanding of Hanusa and Jones' combinatorics in these types.

\subsection{Affine type $B$, rank $3$}
Affine type $B$ has one non-trivial Dynkin diagram automorpism $\tau$, which is defined by permuting the indices $0$ and $1$, and fixing all other $i$.

\begin{eg}  The affine Grassmannian element $z = s_0 s_2 s_3 s_2 s_0$ corresponds to translation by the fundamental coweight $\Lambda^\vee_1$, which under the identification of Hanusa and Jones corresponds to the even symmetric $6$-core $(7,2,1,1,1,1,1)$:
\[ \young{0 \cr 0 \cr 2\cr 3\cr 2 \cr 0&0 \cr 0&0&2&3&2&0&0\cr }\]

This fundamental coweight corresponds to the nontrivial Dynkin automorphism $\tau$.
Again, as in type $C$, the objects involved in the bijection of Hanusa and Jones are symmetric cores, so we will remove half of the diagram, and study the skew partition: 

\[ \young{\blk&0 \cr 0&0&2&3&2&0&0\cr }\]

In this case, $\tau(w_0^1) = z$, so we need to look at all sub-diagrams between $S = \emptyset$ and $R = (7,1)$. There are six such diagrams:

\[\young{\blk&\bf\color{red}1 \cr \bf\color{red}1&\bf\color{red}1&\bf\color{red}2&\bf\color{red}3&\bf\color{red}2&\bf\color{red}1&\bf\color{red}1\cr } \hspace{.1in}
\young{\blk& 0 \cr 0&0&\bf\color{red}2&\bf\color{red}3&\bf\color{red}2&\bf\color{red}1&\bf\color{red}1\cr } \hspace{.1in}
\young{\blk&0 \cr 0 &0&2&\bf\color{red}3&\bf\color{red}2&\bf\color{red}1&\bf\color{red}1\cr } \] 
\[\young{\blk&0 \cr 0&0&2&3&\bf\color{red}2&\bf\color{red}1&\bf\color{red}1\cr } \hspace{.1in}
\young{\blk&0 \cr 0&0&2&3&2&\bf\color{red}1&\bf\color{red}1\cr } \hspace{.1in}
\young{\blk&0 \cr 0&0&2&3&2&0&0\cr } \]

By Theorem \ref{thm:combinatorial}, \[\mathfrak{s}_{z_{\Lambda_1}^\vee}^B = \uu({\bf\color{red}12321}) +
 \uu({0\bf\color{red}1232})  + \uu({20\bf\color{red}123}) +   \uu({320\bf\color{red}12}) + \uu({2320\bf\color{red}1}) + \uu(02320).\]

\end{eg}

\begin{eg}  The affine Grassmannian element $z = s_2 s_3 s_1 s_2 s_3s_1s_2s_0$ corresponds to translation by the Fundamental coweight $\Lambda^\vee_2$, which under the identification of Hanusa and Jones corresponds to the even symmetric $6$-core $(6,6,2,2,2,2)$:
\[ \young{1&2 \cr 2&3\cr 3&2\cr 2&1 \cr 0&0&1&2&3&2 \cr 0&0&2&3&2&1\cr }\]

This coweight corresponds to the trivial Dynkin automorphism, and the even symmetric $6$-core corresponds to the following skew partition: 

\[ \young{\blk&0&1&2&3&2 \cr 0&0&2&3&2&1\cr }\]

In this case, $w_0^2 = s_2s_3s_1s_2s_3s_1s_2$, so we need to look at all skew sub-diagrams between $S = w_0^2 z \emptyset =(2,1)$ and $R = (7,1)$. There are twelve such diagrams:

\[ 
\young{\blk&0&\bf\color{red}1&\bf\color{red}2&\bf\color{red}3&\bf\color{red}2 \cr 0&0&\bf\color{red}2&\bf\color{red}3&\bf\color{red}2&\bf\color{red}1\cr} \hspace{.1in}
\young{\blk&0&\bf\color{red}1&\bf\color{red}2&\bf\color{red}3&\bf\color{red}2 \cr 0&0&2&\bf\color{red}3&\bf\color{red}2&\bf\color{red}1\cr} \hspace{.1in}
\young{\blk&0&1&\bf\color{red}2&\bf\color{red}3&\bf\color{red}2 \cr 0&0&2&\bf\color{red}3&\bf\color{red}2&\bf\color{red}1\cr} \hspace{.1in}
\young{\blk&0&\bf\color{red}1&\bf\color{red}2&\bf\color{red}3&\bf\color{red}2 \cr 0&0&2&3&\bf\color{red}2&\bf\color{red}1\cr} \]
\[ \young{\blk&0&\bf\color{red}1&\bf\color{red}2&\bf\color{red}3&\bf\color{red}2 \cr 0&0&2&3&2&\bf\color{red}1\cr} \hspace{.1in}
\young{\blk&0&1&\bf\color{red}2&\bf\color{red}3&\bf\color{red}2 \cr 0&0&2&3&2&1\cr} \hspace{.1in}
\young{\blk&0&1&\bf\color{red}2&\bf\color{red}3&\bf\color{red}2 \cr 0&0&2&3&\bf\color{red}2&\bf\color{red}1\cr} \hspace{.1in}
\young{\blk&0&1&2&\bf\color{red}3&\bf\color{red}2 \cr 0&0&2&3&2&\bf\color{red}1\cr} \]
\[ \young{\blk&0&1&2&\bf\color{red}3&\bf\color{red}2 \cr 0&0&2&3&2&1\cr} \hspace{.1in}
\young{\blk&0&1&2&3&\bf\color{red}2 \cr 0&0&2&3&2&\bf\color{red}1\cr} \hspace{.1in}
\young{\blk&0&1&2&3&\bf\color{red}2 \cr 0&0&2&3&2&1\cr} \hspace{.1in}
\young{\blk&0&1&2&3&2 \cr 0&0&2&3&2&1\cr} \]

By Theorem \ref{thm:combinatorial}, \[\mathfrak{s}_{z_{\Lambda_2}^\vee}^B = \uu({0\bf\color{red}2132132}) + \uu({20\bf\color{red}213231}) + \uu({120\bf\color{red}21323}) +  \uu({320\bf\color{red}21321})\] 
\[ \uu({2320\bf\color{red}2321}) + \uu({12320\bf\color{red}232})   + \uu({3120\bf\color{red}2132}) + \uu({23120\bf\color{red}231})\] \[+ \uu({123120\bf\color{red}23})  + \uu({323120\bf\color{red}21})+
 \uu({1323120\bf\color{red}2}) + \uu({21323120}) .\]

\end{eg}

\begin{eg}  The affine Grassmannian element $z = s_3 s_2 s_3 s_0 s_2s_3s_1s_2s_0$ corresponds to translation by the fundamental coweight $\Lambda^\vee_3$, which under the identification of Hanusa and Jones corresponds to the even symmetric $6$-core $(7,6,6,4,3,3,1)$:
\[ \young{0 \cr 0&2&3\cr 2&3&2\cr 3&2&0&0 \cr 2&1&0&0&2&3 \cr 0&0&1&2&3&2\cr 0&0&2&3&2&0&0\cr }\]

This coweight corresponds to the nontrivial Dynkin automorphism $\tau$, and the even symmetric $6$-core corresponds to the following skew partition: 

\[ \young{\blk &\blk &\blk& 0\cr\blk &\blk &0&0&2&3 \cr \blk&0&1&2&3&2 \cr 0&0&2&3&2&0&0\cr }\]

In this case, $w_0^3 = s_3s_2s_1s_3s_2s_3$, so we need to look at all skew sub-diagrams between $S = \tau(w_0^3)z \emptyset = (3,2)$ and $R = z\emptyset = (7,5,4,1)$. There are eight such diagrams:

\[ \young{\blk &\blk &\blk& \bf\color{red}1\cr\blk &\blk &\bf\color{red}1&\bf\color{red}1&\bf\color{red}2&\bf\color{red}3 \cr \blk&0&1&\bf\color{red}2&\bf\color{red}3&\bf\color{red}2 \cr 0&0&2&\bf\color{red}3&\bf\color{red}2&\bf\color{red}1&\bf\color{red}1\cr } \hspace{.1in}
 \young{\blk &\blk &\blk& \bf\color{red}1\cr\blk &\blk &\bf\color{red}1&\bf\color{red}1&\bf\color{red}2&\bf\color{red}3 \cr \blk&0&1&\bf\color{red}2&\bf\color{red}3&\bf\color{red}2 \cr 0&0&2&3&\bf\color{red}2&\bf\color{red}1&\bf\color{red}1\cr } \hspace{.1in}
 \young{\blk &\blk &\blk& \bf\color{red}1\cr\blk &\blk &\bf\color{red}1&\bf\color{red}1&\bf\color{red}2&\bf\color{red}3 \cr \blk&0&1&2&\bf\color{red}3&\bf\color{red}2 \cr 0&0&2&3&2&\bf\color{red}1&\bf\color{red}1\cr } \hspace{.1in}
\young{\blk &\blk &\blk& 0\cr\blk &\blk &0&0&\bf\color{red}2&\bf\color{red}3 \cr \blk&0&1&2&\bf\color{red}3&\bf\color{red}2 \cr 0&0&2&3&2&0&0\cr }\]
\[ \young{\blk &\blk &\blk& \bf\color{red}1\cr\blk &\blk &\bf\color{red}1&\bf\color{red}1&\bf\color{red}2&\bf\color{red}3 \cr \blk&0&1&2&3&\bf\color{red}2 \cr 0&0&2&3&2&\bf\color{red}1&\bf\color{red}1\cr } \hspace{.1in}
 \young{\blk &\blk &\blk& 0\cr\blk &\blk &0&0&\bf\color{red}2&\bf\color{red}3 \cr \blk&0&1&2&3&\bf\color{red}2 \cr 0&0&2&3&2&0&0\cr } \hspace{.1in}
 \young{\blk &\blk &\blk& 0\cr\blk &\blk &0&0&2&\bf\color{red}3 \cr \blk&0&1&2&3&2 \cr 0&0&2&3&2&0&0\cr } \hspace{.1in}
\young{\blk &\blk &\blk& 0\cr\blk &\blk &0&0&2&3 \cr \blk&0&1&2&3&2 \cr 0&0&2&3&2&0&0\cr }\]

By Theorem \ref{thm:combinatorial}, \[\mathfrak{s}_{z_{\Lambda_3}^\vee}^B = \uu({120\bf\color{red}323123}) + \uu({3120\bf\color{red}32312}) +\uu({23120\bf\color{red}3231}) + \uu({023120\bf\color{red}323}) \] \[ \uu({323120\bf\color{red}321}) + \uu({3023120\bf\color{red}32}) + \uu({23023120\bf\color{red}3}) + \uu({323023120}).\]
\end{eg}

\subsection{Affine type $D$, rank $4$}

All Dynkin automorphisms in affine $D$ rank $4$ leave the index $2$ fixed. Here we give explicit expansions for two of the fundamental coweights.

 \begin{eg}  The affine Grassmannian element $z = s_0 s_2 s_4 s_1 s_2s_0$ corresponds to translation by the fundamental coweight $\Lambda^\vee_3$, which under the identification of Hanusa and Jones corresponds to the even symmetric $8$-core $(5,4,4,4,1)$:
\[ \young{4 \cr 4&2&0&0\cr 2&1&0&0\cr 0&0&1&2 \cr 0&0&2&4&4 \cr }\]

This coweight corresponds to a nontrivial Dynkin automorphism $\tau$ which swaps $0$ with $3$ and $1$ with $4$, and the even symmetric $8$-core corresponds to the following skew partition: 

\[ \young{\blk &\blk &\blk& 0\cr\blk &\blk &0&0 \cr \blk&0&1&2\cr 0&0&2&4&4\cr }\]

In this case, $w_0^3 = s_3s_2s_4s_1s_2s_3$, so we need to look at all skew sub-diagrams between $S = \tau(w_0^3)z \emptyset = \emptyset$ and $R = z\emptyset = (5,3,2,1)$. There are eight such diagrams:

\[
 \young{\blk &\blk &\blk& \bf\color{red}3\cr\blk &\blk &\bf\color{red}3&\bf\color{red}3 \cr \blk&\bf\color{red}3&\bf\color{red}4&\bf\color{red}2\cr \bf\color{red}3&\bf\color{red}3&\bf\color{red}2&\bf\color{red}1&\bf\color{red}1\cr } \hspace{.1in}
 \young{\blk &\blk &\blk& \bf\color{red}3\cr\blk &\blk &\bf\color{red}3&\bf\color{red}3 \cr \blk&0&\bf\color{red}4&\bf\color{red}2\cr 0&0&\bf\color{red}2&\bf\color{red}1&\bf\color{red}1\cr } \hspace{.1in}
 \young{\blk &\blk &\blk& \bf\color{red}3\cr\blk &\blk &\bf\color{red}3&\bf\color{red}3 \cr \blk&0&\bf\color{red}4&\bf\color{red}2\cr 0&0&2&\bf\color{red}1&\bf\color{red}1\cr } \hspace{.1in}
 \young{\blk &\blk &\blk& \bf\color{red}3\cr\blk &\blk &\bf\color{red}3&\bf\color{red}3 \cr \blk&0&\bf\color{red}4&\bf\color{red}2\cr 0&0&2&4&4\cr }
\]
\[
 \young{\blk &\blk &\blk& \bf\color{red}3\cr\blk &\blk &\bf\color{red}3&\bf\color{red}3 \cr \blk&0&1&\bf\color{red}2\cr 0&0&2&\bf\color{red}1&\bf\color{red}1\cr } \hspace{.1in}
 \young{\blk &\blk &\blk& \bf\color{red}3\cr\blk &\blk &\bf\color{red}3&\bf\color{red}3 \cr \blk&0&1&\bf\color{red}2\cr 0&0&2&4&4\cr } \hspace{.1in}
 \young{\blk &\blk &\blk& \bf\color{red}3\cr\blk &\blk &\bf\color{red}3&\bf\color{red}3 \cr \blk&0&1&2\cr 0&0&2&4&4\cr } \hspace{.1in}
 \young{\blk &\blk &\blk& 0\cr\blk &\blk &0&0 \cr \blk&0&1&2\cr 0&0&2&4&4\cr }
\]

By Theorem \ref{thm:combinatorial}, \[\mathfrak{s}_{z_{\Lambda_4}^\vee}^D = \uu({\bf\color{red}321423}) + \uu({0\bf\color{red}32142}) +\uu({20\bf\color{red}3241}) + \uu({420\bf\color{red}324}) \] \[ \uu({120\bf\color{red}321}) + \uu({4120\bf\color{red}32}) + \uu({24120\bf\color{red}3}) + \uu({024120}).\]
\end{eg}

 \begin{eg}  The affine Grassmannian element $z = s_0 s_2 s_3 s_1 s_2s_0$ corresponds to translation by the fundamental coweight $\Lambda^\vee_4$, which under the identification of Hanusa and Jones corresponds to the even symmetric $8$-core $(4,4,4,4)$:
\[ \young{3&2&0&0\cr 2&1&0&0\cr 0&0&1&2 \cr 0&0&2&3 \cr }\]

This coweight corresponds to a nontrivial Dynkin automorphism $\tau$ which swaps $0$ with $4$ and $1$ with $3$, and the even symmetric $8$-core corresponds to the following skew partition: 

\[ \young{\blk &\blk &\blk& 0\cr\blk &\blk &0&0 \cr \blk&0&1&2\cr 0&0&2&3\cr }\]

In this case, $w_0^4 = s_4s_2s_3s_1s_2s_4$, so we need to look at all skew sub-diagrams between $S = \tau(w_0^4)z \emptyset = \emptyset$ and $R = z\emptyset = (4,3,2,1)$. There are eight such diagrams:

\[
 \young{\blk &\blk &\blk& \bf\color{red}4\cr\blk &\blk &\bf\color{red}4&\bf\color{red}4 \cr \blk&\bf\color{red}4&\bf\color{red}3&\bf\color{red}2\cr \bf\color{red}4&\bf\color{red}4&\bf\color{red}2&\bf\color{red}1\cr } \hspace{.1in}
 \young{\blk &\blk &\blk& \bf\color{red}4\cr\blk &\blk &\bf\color{red}4&\bf\color{red}4 \cr \blk&0&\bf\color{red}3&\bf\color{red}2\cr 0&0&\bf\color{red}2&\bf\color{red}1\cr } \hspace{.1in}
 \young{\blk &\blk &\blk& \bf\color{red}4\cr\blk &\blk &\bf\color{red}4&\bf\color{red}4 \cr \blk&0&\bf\color{red}3&\bf\color{red}2\cr 0&0&2&\bf\color{red}1\cr } \hspace{.1in}
 \young{\blk &\blk &\blk& \bf\color{red}4\cr\blk &\blk &\bf\color{red}4&\bf\color{red}4 \cr \blk&0&\bf\color{red}3&\bf\color{red}2\cr 0&0&2&3\cr }
\]
\[
 \young{\blk &\blk &\blk& \bf\color{red}4\cr\blk &\blk &\bf\color{red}4&\bf\color{red}4 \cr \blk&0&1&\bf\color{red}2\cr 0&0&2&\bf\color{red}1\cr } \hspace{.1in}
 \young{\blk &\blk &\blk& \bf\color{red}4\cr\blk &\blk &\bf\color{red}4&\bf\color{red}4 \cr \blk&0&1&\bf\color{red}2\cr 0&0&2&3\cr } \hspace{.1in}
 \young{\blk &\blk &\blk& \bf\color{red}4\cr\blk &\blk &\bf\color{red}4&\bf\color{red}4 \cr \blk&0&1&2\cr 0&0&2&3\cr } \hspace{.1in}
 \young{\blk &\blk &\blk& 0\cr\blk &\blk &0&0 \cr \blk&0&1&2\cr 0&0&2&3\cr }
\]

By Theorem \ref{thm:combinatorial}, \[\mathfrak{s}_{z_{\Lambda_4}^\vee}^D = \uu({\bf\color{red}421324}) + \uu({0\bf\color{red}42132}) +\uu({20\bf\color{red}4231}) + \uu({320\bf\color{red}423}) \] \[ \uu({120\bf\color{red}421}) + \uu({3120\bf\color{red}42}) + \uu({23120\bf\color{red}4}) + \uu({023120}).\]
\end{eg}

\bibliographystyle{amsalpha}

\end{document}